\title{Restrained condition on double Roman dominating functions\thanks{The first two authors (Babak Samadi and Nasrin Soltankhah) have been supported by the Iran National Science Foundation (INSF) and Alzarha University, Grant No. 99022321.}\vspace{5mm}}
\author{B. Samadi\thanks{Corresponding author}, N. Soltankhah, H. Abdollahzadeh Ahangar\thanks{Corresponding author}, M. Chellali,\\ D.A. Mojdeh, S.M. Sheikholeslami and J.C. Valenzuela-Tripodoro\\[0.5cm]
}
\date{}
\newtheorem{theorem}{Theorem}[section]
\newtheorem{corollary}[theorem]{Corollary}
\newtheorem{proposition}[theorem]{Proposition}
\newtheorem{obs}[theorem]{Observation}
\theoremstyle{definition}
\theoremstyle{remark}
\begin{document}

\maketitle
\begin{abstract}
We continue the study of restrained double Roman domination in graphs. For a graph $G=\big{(}V(G),E(G)\big{)}$, a double Roman dominating function $f$ is called a restrained double Roman dominating function (RDRD function) if the subgraph induced by $\{v\in V(G)\mid f(v)=0\}$ has no isolated vertices. The restrained double Roman domination number (RDRD number) $\gamma_{rdR}(G)$ is the minimum weight $\sum_{v\in V(G)}f(v)$ taken over all RDRD functions of $G$. 

We first prove that the problem of computing $\gamma_{rdR}$ is NP-hard even for planar graphs, but it is solvable in linear time when restricted to bounded clique-width graphs such as trees, cographs and distance-hereditary graphs. Relationships between $\gamma_{rdR}$ and some well-known parameters such as restrained domination number $\gamma_{r}$, domination number $\gamma$ and restrained Roman domination number $\gamma_{rR}$ are investigated in this paper by bounding $\gamma_{rdR}$ from below and above involving $\gamma_{r}$, $\gamma$ and $\gamma_{rR}$ for general graphs, respectively. We prove that $\gamma_{rdR}(T)\geq n+2$ for any tree $T\neq K_{1,n-1}$ of order $n\geq2$ and characterize the family of all trees attaining the lower bound. The characterization of graphs with small RDRD numbers is given in this paper.   

\noindent \ \ \
\end{abstract}
{\bf Keywords:} Restrained double Roman domination, Roman domination, NP-hard, tree, planar graph, bounded clique-width graph, restrained domination number, domination number.\vspace{1mm}\\
{\bf MSC 2010:} 05C69.


\section{Introduction and preliminaries}

Throughout this paper, we consider $G$ as a finite simple graph with vertex set $V(G)$ and edge set $E(G)$. We use \cite{w} as a reference for terminology and notation which are not explicitly defined here. The {\em open neighborhood} of a vertex $v$ is denoted by $N(v)$, and its {\em closed neighborhood} is $N[v]=N(v)\cup \{v\}$. The {\em minimum} and {\em maximum degrees} of $G$ are denoted by $\delta(G)$ and $\Delta(G)$, respectively. A {\em universal} vertex of a graph of order $n$ is a vertex of degree $n-1$. For a given subset $S\subseteq V(G)$, by $G[S]$ we represent the subgraph induced by $S$ in $G$. A tree $T$ is a {\em double star} if it contains exactly two vertices that are not leaves. A double star with $p$ and $q$ leaves attached to each support vertex, respectively, is denoted by $S_{p,q}$. For a tree $T$, by $L(T)$ we denote its set of leaves.

A set $S\subseteq V(G)$ is called a {\em dominating set} if every vertex not in $S$ has a neighbor in $S$. The {\em domination number} $\gamma(G)$ of $G$ is the minimum cardinality among all dominating sets of $G$. A {\em restrained dominating set} (RD set) in a graph $G$ is a dominating set $S$ in $G$ for which every vertex in $V(G)\setminus S$ is adjacent to another vertex in $V(G)\setminus S$. The {\em restrained domination number} of $G$, denoted by $\gamma_{r}(G)$, is the smallest cardinality of an RD set of $G$. This concept was formally introduced in \cite{dhhlr} (albeit, it was indirectly introduced in \cite{tp}). In general, the reader can refer to \cite{hhh} for more information on domination parameters.

For a function $f:V(G)\rightarrow\{0,\cdots,k\}$, we let $V^{f}_{i}=\{v\in V(G)\mid f(v)=i\}$ for each $0\leq i\leq k$ (we simply write $V_{i}$ if there is no ambiguity with respect to the function $f$). We call $\omega(f)=f\big{(}V(G)\big{)}=\sum_{v\in V(G)}f(v)$ as the {\em weight} of $f$.

A {\em Roman dominating function} (or an RD function for short) of a graph $G$ is a function $f:V(G)\rightarrow\{0,1,2\}$ such that if $v\in V_0$ for some $v\in V(G)$, then there exists $w\in N(v)$ such that $w\in V_2$. This concept was formally defined by Cockayne \emph{et al.} \cite{cdhh} motivated, in some sense, by the article of Ian Stewart entitled ``Defend the Roman Empire!" (\cite{s}), published in {\it Scientific American}.

A {\em double Roman dominating function} (DRD function for short) of a graph $G$ is a function $ f:V(G)\rightarrow \{0,1,2,3\}$ for which the following conditions are satisfied.

(a) If $f(v)=0$, then $v$ must have at least two neighbors in $V_2$ or one neighbor in $V_3$.

(b) If $f(v)=1$, then the vertex $v$ must have at least one neighbor in $V_2\cup V_3$.\\
The {\em double Roman domination number} $\gamma_{dR}(G)$ equals the minimum weight of a DRD function on $G$. This concept was introduced by Beeler et al. in \cite{bhh}. This parameter was also studied in \cite{Liu} and \cite{ywll}, for instance.

A large number of papers have posed the condition ``$G[V_{0}^{f}]$ has no edges", or equivalently, ``$V_{0}^{f}$ is independent" on the RD functions and DRD functions $f$ of a graph $G$ and introduced the concepts of {\em outer independent Roman domination} and {\em outer independent double Roman domination} in graphs (see, for example, \cite{acs0,acs,mssy}). That ``$G[V_{0}^{f}]$ has no isolated vertices" is another usual condition posed on the RD functions and DRD functions $f$ of a graph $G$. In fact, Pushpam and Padmapriea \cite{pp} initiated the study of restrained Roman dominating functions (RRD functions) in graphs by posing this last condition on the RD functions of the graphs. The restrained Roman domination number $\gamma_{rR}(G)$ is the minimum weight taken over all RRD functions of $G$. This concept was next studied in \cite{jk} and \cite{sas}. Mojdeh et al. \cite{mmv} introduced the concept of restrained double Roman domination in graphs. A {\em restrained double Roman dominating function} (RDRD function) $f$ of $G$ is a DRD function of $G$ for which $G[V_{0}^{f}]$ has no isolated vertices. The {\em double Roman domination number} (RDRD number) $\gamma_{rdR}(G)$ equals the minimum weight taken over all RDRD functions of $G$.

Gao et al. \cite{gxy} have recently showed that the decision problem associated with $\gamma_{rdR}$ is NP-complete for chordal graphs. They also studied this graph parameter for the strong and direct products of two graphs.

We continue the study and investigation of restrained double Roman domination in graphs. This paper is is organized as follows. We first prove that the decision problem associated with the RDRD number is NP-complete even when restricted to planar graphs. In spite of this fact, it is shown in this paper that the problem of computing $\gamma_{rdR}$ can be solved in linear time for the bounded clique-width graphs. We also bound $\gamma_{rdR}$ from below and above for general graphs involving the domination number and the restrained domination number. Section $3$ is dedicated to the study of restrained double Roman domination in trees. In the last section, the characterization of all connected graphs with small RDRD numbers (that is, $\gamma_{rdR}\in\{3,4,5\}$) is given.

For the sake of convenience, by a $\gamma(G)$-set (resp. $\gamma_{r}(G)$-set) we mean a dominating set (resp. RD set) in $G$ of minimum cardinality. Also, by a $\gamma_{rdR}(G)$-function (resp. $\gamma_{rR}(G)$-function) we mean an RDRD function (resp. RRD function) of $G$ with minimum weight.

We close this part by the following properties of RDRD functions.

\begin{obs}
\label{leaf}Let $f=(V_{0},V_{1},V_{2},V_{3})$ be a $\gamma_{rdR}(G)$-function of a graph $G$. Then,
\begin{itemize}
\item[\emph{(}i\emph{)}] Every leaf of $G$ belongs to $V_{1}\cup V_{2}\cup V_{3}$.
\item[\emph{(}ii\emph{)}] If $\left\vert V_{0}\right\vert $ is maximum, then $V_{1}$ is independent, and no vertex in $V_{1}$ has a neighbor in $V_{0}$.
\end{itemize}
\end{obs}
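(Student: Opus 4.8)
The statement splits into two independent claims, which I would treat with different tools: part~(i) by a short direct contradiction, and part~(ii) by an extremal (exchange) argument among the minimum-weight functions.

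For part~(i), I would argue by contradiction. Suppose some leaf $v$ has $f(v)=0$, and let $u$ be its unique neighbor. Because $G[V_{0}]$ has no isolated vertices, the vertex $v$ must have a neighbor in $V_{0}$; since $u$ is its only neighbor, this forces $f(u)=0$. On the other hand, $f(v)=0$ together with condition~(a) requires $v$ to have either two neighbors in $V_{2}$ or one neighbor in $V_{3}$, which is impossible once its single neighbor $u$ lies in $V_{0}$. Hence $f(v)\neq0$, and every leaf belongs to $V_{1}\cup V_{2}\cup V_{3}$. The point worth stressing is that neither the double Roman condition alone nor the restrained condition alone rules out a zero leaf; it is their \emph{combination} (the unique neighbor cannot simultaneously be in $V_{0}$ and in $V_{2}\cup V_{3}$) that does.

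For part~(ii), I would fix a $\gamma_{rdR}(G)$-function $f$ with $|V_{0}|$ maximum, assume in each case that the conclusion fails, and build a new RDRD function $g$ satisfying either $\omega(g)<\omega(f)$ (contradicting minimality of the weight) or $\omega(g)=\omega(f)$ with $|V_{0}^{g}|>|V_{0}|$ (contradicting maximality of $|V_{0}|$). To forbid a vertex $v\in V_{1}$ from having a neighbor $u\in V_{0}$, I would try $g(v)=0$. Condition~(b) gives $v$ a neighbor in $V_{2}\cup V_{3}$; if that neighbor is in $V_{3}$, or if $v$ has two neighbors in $V_{2}$, then condition~(a) already holds at $v$, the restrained condition at $v$ is met by $u\in V_{0}$, and $g$ is an RDRD function of strictly smaller weight. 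In the only remaining case $v$ has a unique high neighbor $s\in V_{2}$ and none in $V_{3}$; here I set $g(v)=0$ and $g(s)=3$, which preserves the weight since $1+2=0+3$, keeps $g$ an RDRD function (raising $s$ can only help its neighbors, and $u$ still witnesses the restrained condition at $v$), and enlarges the zero-set by one. Either branch yields a contradiction.

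To show $V_{1}$ is independent, suppose $u,v\in V_{1}$ are adjacent. Moving only one of them to $0$ is blocked by the restrained condition, as a $V_{1}$-vertex need not have any neighbor in $V_{0}$; I regard this as the main obstacle and would get around it by setting $g(u)=g(v)=0$ \emph{simultaneously}, so that $u$ and $v$ serve as each other's required zero neighbor. Condition~(a) is then restored vertex by vertex: if $u$ (respectively $v$) lacks it, I raise one of its $V_{2}$-neighbors to $3$, which is possible by condition~(b). Each such raise costs $+1$ while the two reassignments to $0$ save $2$, so $\omega(g)\le\omega(f)$; and whenever equality holds, at least one genuinely new zero-vertex has been created, giving $|V_{0}^{g}|>|V_{0}|$. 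Thus the delicate bookkeeping is entirely in the restrained condition, namely guaranteeing that every newly created zero-vertex gains a zero neighbor while no old zero-vertex is stripped of one. Since all reassignments are either to $0$ (enlarging $V_{0}$) or increases to $3$ (away from $u$ and $v$), no previously valid instance of condition~(a) or~(b) is destroyed and the zero-set only grows, so these verifications are routine once the moves are chosen as above, and the contradiction follows in every case.
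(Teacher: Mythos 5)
Your proof is correct and takes essentially the same route as the paper's for part (ii): an exchange argument that reassigns the offending $V_{1}$-vertices to $0$ and raises their neighbors in $V_{2}\cup V_{3}$ to $3$, contradicting either the minimality of $\omega(f)$ or the maximality of $|V_{0}|$. Your argument for part (i) is also correct (the paper omits that part as routine), and your extra bookkeeping about when the weight strictly drops versus stays equal is a harmless refinement of the same idea.
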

\begin{proof}
We only prove $(ii)$. Assume first that $V_{1}$ is not independent and let $y,z\in V_{1}$ be two adjacent vertices. Let $
y'$ and $z'$ be their respective neighbors in $V_{2}\cup V_{3}$. Then $g(y)=g(z)=0,$ $g(t)=\max \{f(y')+1,f(z')+1,3\}$ for $t\in\{y',z'\}$ and $g(v)=f(v)$ otherwise, is a $\gamma_{rdR}(G)$-function with $|V_{0}^{g}|>|V_{0}|$, a contradiction. Assume now that a vertex $z\in V_{1} $ has a neighbor in $V_{0}.$ By definition, $z$ must have a neighbor $y\in V_{2}\cup V_{3}$. Then the function $g$ defined by $g(z)=0,$ $g(y)=\max\{f(y)+1,3\}$ and $g(t)=f(t)$ otherwise, is a $\gamma_{rdR}(G)$-function for which $|V_{0}^{g}|>|V_{0}|$, a contradiction.
\end{proof}

 
\section{Complexity and computational issues}

We consider the problem of deciding whether a graph $G$ has an RDRD function of weight at most a given integer. That is stated in the following decision problem.

$$\begin{tabular}{|l|}
  \hline
  \mbox{RISTRAINED DOUBLE ROMAN DOMINATION problem (RDRD problem)}\\
  \mbox{INSTANCE: A graph $G$ and an integer $j\leq2|V(G)|$.}\\
  \mbox{QUESTION: Is there an RDRD function $f$ of weight at most $j$?}\\
  \hline
\end{tabular}$$

\subsection{Hardness results}

In what follows, we make use of the ROMAN DOMINATION problem. Schnupp \cite{Schnupp} showed that this problem is NP-complete for planar graphs (see also \cite{hhh} for more explanation).

$$\begin{tabular}{|l|}
  \hline
  \mbox{ROMAN DOMINATION problem}\\
  \mbox{INSTANCE: A graph $G$ and an integer $k\leq|V(G)|$.}\\
  \mbox{QUESTION: Is there an RD function $f$ of weight at most $k$?}\\
  \hline
\end{tabular}$$

Mojdeh et al. \cite{mmv} proved that the RDRD problem is NP-complete for general graphs. They also asked whether this problem for planar graphs belongs to the class of NP-complete problems. In what follows, we answer this question in the affirmative.

\begin{theorem}\label{Comp}
The RDRD problem is NP-complete even when restricted to planar graphs.
\end{theorem}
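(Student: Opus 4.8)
The plan is to establish NP-completeness by a polynomial-time reduction from the ROMAN DOMINATION problem restricted to planar graphs, which is known to be NP-complete by Schnupp~\cite{Schnupp}. Membership in NP is routine: given a function $f:V(G)\rightarrow\{0,1,2,3\}$, one verifies in polynomial time that it is a DRD function and that $G[V_0^f]$ has no isolated vertices, then checks that $\omega(f)\leq j$. The substance of the argument is therefore the hardness reduction, and the main obstacle will be designing a planar gadget that (a) forces a controlled correspondence between optimal RD functions of the input graph $G$ and optimal RDRD functions of the constructed graph $G'$, and (b) preserves planarity.

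**First I would** describe the gadget construction. Starting from a planar instance $G$ of ROMAN DOMINATION, I would attach to each vertex $v\in V(G)$ a small local gadget $H_v$ whose role is twofold: to carry the ``weight-shift'' that converts the $\{0,1,2\}$-regime of Roman domination into the $\{0,1,2,3\}$-regime of double Roman domination, and to supply each vertex $v$ with private neighbors so that the restrained condition (no isolated zeros) can always be met locally without interfering with the domination structure on $V(G)$. A natural choice is to attach to each $v$ a path or a small tree of a few vertices (for instance, a copy of $P_2$ or a pendant configuration) so that any vertex assigned $0$ has a guaranteed zero-neighbor inside its own gadget, while the gadget's optimal internal cost is a fixed constant $c$ independent of $v$. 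Because each $H_v$ is attached at the single vertex $v$, planarity of $G'=G\cup\bigcup_v H_v$ follows immediately from planarity of $G$: the gadgets can be drawn in small disjoint disks around each vertex in a fixed planar embedding of $G$.

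**The crux of the proof** is the weight-transfer lemma: I must show there is a constant $\alpha$ (depending only on $|V(G)|$ through the number of gadgets) such that $G$ has an RD function of weight at most $k$ if and only if $G'$ has an RDRD function of weight at most $k+\alpha$. The forward direction is the easier one: given an RD function $f$ on $G$, I would build an RDRD function $f'$ on $G'$ by rescaling the labels on $V(G)$ (e.g.\ promoting each $2$ to a $3$ and handling the $1$'s and $0$'s according to the double Roman conditions) and assigning the optimal fixed labelling on each gadget; the gadgets guarantee the restrained condition and the added cost is exactly $\alpha$. The reverse direction is where I expect the real difficulty: starting from an arbitrary $\gamma_{rdR}(G')$-function $g$ of weight at most $k+\alpha$, I must argue that $g$ can be normalized — without increasing its weight — so that each gadget is in its canonical minimum state and the restriction of $g$ to $V(G)$ projects down to a genuine RD function of $G$ of weight at most $k$. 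This normalization step, which typically proceeds by a sequence of local exchange arguments (in the spirit of Observation~\ref{leaf}) that push weight off the gadgets and out of the zero-components onto $V(G)$, is the technical heart of the reduction and the place where the gadget must be engineered carefully so that no ``cheating'' configuration can beat the intended correspondence.
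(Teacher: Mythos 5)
Your overall strategy coincides with the paper's: membership in NP is routine, and the hardness comes from a polynomial reduction from ROMAN DOMINATION on planar graphs (Schnupp) by attaching a local gadget to each vertex of $G$ and proving a weight-transfer identity. However, the proposal stops exactly where the proof begins: you never exhibit the gadget, never fix the constant $\alpha$, and explicitly defer both the forward weight computation and the normalization of an arbitrary optimal RDRD function of $G'$. That deferred material is the entire technical content of the theorem, so as it stands this is a plan, not a proof.

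Moreover, the concrete hints you do give would not work as stated. (1) A pendant $P_2$ or small pendant tree at $v$ cannot ``supply $v$ with a private zero-neighbor'': in a pendant path $v\,a\,b$ the leaf $b$ must receive a positive label, and $a$ can be labeled $0$ only if it has a $0$-neighbor and two $2$-neighbors or a $3$-neighbor, which the pendant path cannot provide from inside itself. One is forced toward something like the paper's gadget $H_i$ (a $K_{2,4}$ with parts $\{a_i,b_i\}$ and $\{c_i,d_i,e_i,f_i\}$ plus the edges $c_id_i$ and $e_if_i$, with $v_i$ joined to $a_i$ and $f_i$), where $(a_i,b_i)=(2,2)$ simultaneously dominates the interior zeros and hands $v_i$ one $2$-neighbor and one $0$-neighbor. (2) Your requirement that the gadget's optimal internal cost be a \emph{fixed} constant independent of the label of $v$ is in tension with the correspondence you want: in the paper's argument the gadget costs $4$ generically but must cost $5$ precisely when $v_i$ is labeled $0$ and has no $2$-labeled neighbor inside $V(G)$, and this variable surcharge is exactly what encodes the Roman ``$+1$'' and yields $\gamma_{rdR}(G')=4n+\gamma_R(G)$; a gadget of truly fixed cost that lets $v$ be labeled $0$ for free would make the optimum $cn$ regardless of $\gamma_R(G)$. (3) The forward direction via ``promoting each $2$ to a $3$'' does not give weight $k+\alpha$: it gives $k+|V_1|+|V_2|$, which is not a function of $k$ alone. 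The paper instead keeps the Roman labels on $V(G)$ unchanged and lets each gadget contribute two $2$'s, so a Roman $0$-vertex with a $2$-neighbor in $G$ automatically has two $2$-neighbors in $G'$. Until the gadget is pinned down and both inequalities of the identity $\gamma_{rdR}(G')=4n+\gamma_R(G)$ are proved, the reduction is not established.
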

\begin{proof}
The problem clearly belongs to NP because it can be checked in polynomial time that a given function is indeed an RDRD function of weight at most $j$.

We set $j=4n+k$. For a given graph $G$ with $V(G)=\{v_{1},\cdots,v_{n}\}$, we construct a graph $G'$ as follows. For any $1\leq i\leq n$, let $H_{i}$ be obtained from a copy of $K_{2,4}$ with bipartite sets $\{a_{i},b_{i}\}$ and $\{c_{i},d_{i},e_{i},f_{i}\}$ by adding two edges $c_{i}d_{i}$ and $e_{i}f_{i}$. The graph $G'$ is now obtained by joining $v_{i}$ to both $a_{i}$ and $f_{i}$ for each $1\leq i\leq n$. It is clear from the context that $G'$ is constructed in polynomial time. Moreover, $G'$ is planar if $G$ is.

Let $f$ be a $\gamma_{rdR}(G')$-function for which $|V^{f}_{3}\cap V(G)|$ is minimized. Suppose that $f(v_{j})=3$ for some $1\leq j\leq n$. We observe that $f\big{(}V(H_{j})\big{)}=3$ by the minimality of $f$. In such a situation, $f$ assigns $3$ to $b_{j}$ and $0$ to the other vertices of $H_{j}$. Let $v_{i}$ be any vertex in $N_{G}(v_{j})\cap V^{f}_{0}$. Since $f(v_{i})=0$, it is easy to see that $f\big{(}V(H_{i})\big{)}\geq4$. Taking the minimality of $f$ into account, $f$ assigns $2$ to both $a_{i}$ and $b_{i}$, and $0$ to the other vertices of $H_{i}$. Because $v_{i}$ was an arbitrary vertex in $N_{G}(v_{j})\cap V^{f}_{0}$, it follows that the assignment $\big{(}f'(a_{j}),f'(b_{j}),f'(v_{j})\big{)}=(2,2,2)$ and $f'(v)=f(v)$ for any other vertex $v\in V(G')$ is a $\gamma_{rdR}(G')$-function of $G'$ for which $|V^{f'}_{3}\cap V(G)|<|V^{f}_{3}\cap V(G)|$. This contradicts our choice of $f$. The above argument guarantees that $V^{f}_{3}\cap V(G)=\emptyset$.

We now set $X=\{v\in V^{f}_{0}\cap V(G)\mid \mbox{$v$ does not have any neighbor in}\ V^{f}_{2}\cap V(G)\}$.
Since $f(v_{i})\leq2$ for all $1\leq i\leq n$, it follows that $f\big{(}V(H_{i})\big{)}\geq4$ for all $1\leq i\leq n$. Moreover, we must have $f\big{(}V(H_{i})\big{)}\geq5$ when $v_{i}\in X$ (in this case, $\big{(}f(a_{i}),f(b_{i})\big{)}=(3,2)$ and $f(v)=0$ for the other vertices $v$ of $H_{i}$). We then have

\begin{equation}\label{INE1}
\begin{array}{lcl}
\gamma_{rdR}(G')=\omega(f)&=&\sum_{i=1}^{n}f\big{(}V(H_{i})\big{)}+f\big{(}V(G)\big{)}\\
&=&\sum_{v_{i}\in V(G)\setminus X}f\big{(}V(H_{i})\big{)}+\sum_{v_{i}\in X}f\big{(}V(H_{i})\big{)}+f\big{(}V(G)\big{)}\\
&\geq&4|V(G)\setminus X|+5|X|+f\big{(}V(G)\big{)}\\
&=&4n+|X|+f\big{(}V(G)\big{)}.
\end{array}
\end{equation}

On the other hand, we define $g:V(G)\rightarrow\{0,1,2\}$ as $g(v)=1$ for each $v\in X$, and $g(v)=f(v)$ for any other vertex $v$ of $G$ (this is indeed well-defined as $V^{f}_{3}\cap V(G)=\emptyset$). It is easily seen that $g$ is an RD function of $G$ of weight $\omega(g)\leq|X|+f\big{(}V(G)\big{)}$. Together this inequality and the inequality chain (\ref{INE1}) imply that $\gamma_{rdR}(G')\geq4n+\gamma_{R}(G)$.

Conversely, let $h$ be a $\gamma_{R}(G)$-function. We define $h'$ on $V(G')$ as follows. Let $h'\mid_{V(G)}=h$, $\big{(}h'(a_{i}),h'(b_{i})\big{)}=(2,2)$ for any $1\leq i\leq n$, and $h'(v)=0$ for the other vertices $v$ of $G'$. It is not difficult to see that $h'$ fulfills all conditions of an RDRD function of $G'$. Moreover, $\omega(h')=4n+h\big{(}V(G)\big{)}=4n+\gamma_{R}(G)$. Therefore, $\gamma_{rdR}(G')\leq4n+\gamma_{R}(G)$.

The desired equality $\gamma_{rdR}(G')=4n+\gamma_{R}(G)$ completes our reduction by taking into account the fact that $\gamma_{rdR}(G')\leq j$ if and only if $\gamma_{R}(G)\leq k$. On the other hand, because the RD problem is NP-complete for planar graphs, we have the same with the RDRD problem. This completes the proof.
\end{proof}

However, it is possible to approach the RDRD problem in linear time in some f such as trees family of graphs such as trees and
cographs. In the rest of this part, we prove that the RDRD problem can be solved in linear time for graphs with bounded
clique-width. Among these graphs we can find distance-hereditary graphs, series-parallel graphs and cographs. It is also important to say that any bounded treewidth graph is a bounded clique-width graph. So, we can also consider {the whole of} trees. 

The {\em clique-width} $cw(G)$ of a graph $G$ is the minimum number of labels needed to construct $G$ using the following four
operations:\vspace{0.5mm}\\
$\bullet$ Create a new graph with a single vertex $v$ with label $i$ (this operation is written $i(v)$).\vspace{0.5mm}\\
$\bullet$ Take the disjoint union $G+H$ of two labelled graphs $G$ and $H$.\vspace{0.5mm}\\
$\bullet$ Add an edge between each vertex with label $i$ and each vertex with label $j$, $i\neq j$ (written $\eta_{i,j}$).\vspace{0.5mm}\\
$\bullet$ Relabel every vertex with label $i$ to have label $j$ (written $\rho_{i\rightarrow j}$).\vspace{0.5mm}

A construction of $G$ with the four operations is called $k$-{\em expression} if it uses at most $k$ labels (among $\{1,\ldots,k\}$ for the sake of convenience). Thus the clique-width of $G$ is the minimum $k$ for which $G$ has a $k$-expression. For example, we can construct the star $K_{1,3}$ (on vertices $v_{1}$, $v_{2}$, $v_{3}$ and $v_{4}$ with central vertex $v_{1}$) as $\eta_{1,2}(1({v_1})+(2({v_2})+(2({v_3})+2({v_4}))))$. On the other hand, a construction using only one label is impossible. Therefore, $cw(K_{1,3})=2$.

For our purpose, we make use of a theorem by Courcelle et al. \cite{cmr} stating that every property of a graph $G$, with bounded clique-width, can be decided in linear time whenever the property is definable in the monadic second order logic language and a $k$-expression of $G$ is part of the input. The so-called language of monadic second order logic is the set of formulas formed with the Boolean connectives $\wedge,\vee,\lnot$, the set quantifications $\forall,\exists$ and the objects. Let $G(\tau_{1})$ be the graph $G$ presented as a logic structure $<V(G),\mathcal{R}>$, where $v\ \mathcal{R}\ w=\mathcal{R}(v,w)$ if and only if $vw$ is an edge of $G$. Finally, let us denote by $MSOL(\tau_{1})$ the monadic second order logic with quantifications over the subsets of elements of $G(\tau_{1})$. For more details, the reader can consult \cite{cmr} (and \cite{lklp}, for the restricted definitions).

The optimization problems that can be defined as
\begin{equation}\label{Logic}
Opt \; \left\{ \sum_{1\le i \le l} a_i |X_i| \;:\;<G(\tau_1),X_1,\ldots,X_l> \; \vDash \theta(X_1,\ldots,X_l) \right\}
\end{equation}
where $X_j$ are free set variables, $a_i$ are integers and $\theta$ is a formula belonging to $MSOL(\tau_1)$, form the class of $LinEMSOL(\tau_1)$ optimization problems.

Liedloff et al. \cite{lklp} used the result by Courcelle to show that the complexity of the Roman domination decision problem could be relaxed under certain restrictions on the underlying graphs. We next prove an analogous result regarding the decision problem associated with the RDRD number.

\begin{theorem}
The RDRD problem belongs to $LinEMSOL(\tau_{1})$.
\end{theorem}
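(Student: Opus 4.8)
The plan is to exhibit $\gamma_{rdR}$ as a $LinEMSOL(\tau_{1})$ optimization problem having exactly the shape of (\ref{Logic}), in the same spirit as Liedloff et al.\ did for Roman domination. First I would encode an arbitrary RDRD function $f=(V_{0},V_{1},V_{2},V_{3})$ by its four color classes, introducing $l=4$ free set variables $X_{1},X_{2},X_{3},X_{4}$ intended to play the roles of $V_{0},V_{1},V_{2},V_{3}$, respectively. Since $\omega(f)=0\cdot|V_{0}|+1\cdot|V_{1}|+2\cdot|V_{2}|+3\cdot|V_{3}|$, the objective $\sum_{1\le i\le 4}a_{i}|X_{i}|$ with integer coefficients $(a_{1},a_{2},a_{3},a_{4})=(0,1,2,3)$ computes $\omega(f)$ exactly, and we take $Opt=\min$. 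It then remains to produce a formula $\theta(X_{1},X_{2},X_{3},X_{4})\in MSOL(\tau_{1})$ that is satisfied precisely when $(X_{1},X_{2},X_{3},X_{4})$ corresponds to an RDRD function of $G$.

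The formula $\theta$ is the conjunction of four subformulas. The first, $\mathrm{Part}(X_{1},X_{2},X_{3},X_{4})$, asserts that the $X_{i}$ partition $V(G)$ (every vertex lies in some $X_{i}$ and in no two of them), which is expressible with $\forall$ and the membership predicates. The second encodes DRD condition (b): $\forall v\,[\,v\in X_{2}\rightarrow \exists w(\mathcal{R}(v,w)\wedge(w\in X_{3}\vee w\in X_{4}))\,]$. The third encodes DRD condition (a): $\forall v\,[\,v\in X_{1}\rightarrow(\exists w(\mathcal{R}(v,w)\wedge w\in X_{4})\vee\exists w_{1}\exists w_{2}(w_{1}\neq w_{2}\wedge \mathcal{R}(v,w_{1})\wedge \mathcal{R}(v,w_{2})\wedge w_{1}\in X_{3}\wedge w_{2}\in X_{3}))\,]$, where ``at least two neighbors in $V_{2}$'' is captured by quantifying over a pair of distinct vertices and forcing $w_{1}\neq w_{2}$. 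The fourth is the restrained condition, stating that $G[X_{1}]$ has no isolated vertex, namely $\forall v\,[\,v\in X_{1}\rightarrow\exists w(\mathcal{R}(v,w)\wedge w\in X_{1})\,]$.

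Each subformula uses only first-order quantification over vertices together with the edge predicate $\mathcal{R}$ and membership in the free set variables, so their conjunction $\theta$ lies in $MSOL(\tau_{1})$; indeed, no quantification over sets beyond the free variables is required. Consequently $\gamma_{rdR}(G)=\min\{\sum_{1\le i\le 4}a_{i}|X_{i}|:\langle G(\tau_{1}),X_{1},\dots,X_{4}\rangle\vDash\theta\}$ fits the template (\ref{Logic}), which is exactly what it means for the RDRD problem to belong to $LinEMSOL(\tau_{1})$.

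I expect no deep obstacle; the entire content is the faithful translation of the definition of an RDRD function into logic, and the only points demanding care are (i) rendering condition (a) correctly, since ``at least two neighbors in $V_{2}$ or one neighbor in $V_{3}$'' must be split into the two disjuncts with the pair of neighbors forced distinct, and (ii) verifying that the partition clause together with $a_{1}=0$ makes the value of $X_{1}$ weightless yet still available to state condition (a) and the restrained condition. Once $\theta$ is checked to capture exactly the RDRD functions of $G$, the conclusion is immediate, and combined with the theorem of Courcelle et al.\ \cite{cmr} it yields the announced linear-time solvability on bounded clique-width graphs.
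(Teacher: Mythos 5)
Your proposal is correct and follows essentially the same route as the paper: encode the four colour classes as free set variables, take the linear objective $|V_{1}|+2|V_{2}|+3|V_{3}|$, and write an $MSOL(\tau_{1})$ formula expressing the two DRD conditions together with the restrained condition, then invoke the template (\ref{Logic}). If anything, your formula is slightly more careful than the paper's, which states the same content as a single universally quantified disjunction but omits both an explicit partition clause and the requirement $r\neq s$ on the two $V_{2}$-neighbours of a $0$-vertex.
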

\begin{proof}
To prove the result we only have to define the RDRD problem as an optimization problem with the structure
described in (\ref{Logic}).

Let $g=(V_0,V_1,V_2,V_3)$ be a $\gamma_{rdR}(G)$-function. Let us define the set of free set variables $\{X_j\}_{i=0}^{3}$ in which $X_j(v)=1$ if $v\in V_j$ and $X_j(v)=0$ otherwise, for each $0\leq j\leq3$.

We denote by $|X_j|$ the sum $\sum_{v\in V}X_{j}(v)$. Thus, $|X_j|=|V_j|$ for each $0\leq j\leq3$. So, the problem of finding an RDRD function with minimum weight is equivalent to solving the optimization problem
\begin{equation*}
\min_{X_{i}}\left\{|X_{1}|+2|X_{2}|+3|X_{3}|\;:\;<G(\tau_{1}),X_{0},\ldots,X_{3}>\;\vDash \theta(X_{0},\ldots,X_{3})\right\},
\end{equation*}
where $\theta$ is defined as follows:
\begin{equation*}
\begin{array}{c}
\theta (X_{0},\ldots ,X_{3})=\forall v\left[ {\ \vphantom{\frac{\frac{A}{B}}{\frac{A}{B}}}}X_{3}(v)\vee X_{2}(v)\vee \left[ {%
\vphantom{\frac{\int A}{B}}}X_{1}(v)\wedge \exists r\left( \left(
X_{2}(r)\vee X_{3}(r)\right) \wedge \mathcal{R}(v,r)\right) {%
\vphantom{\frac{\int A}{B}}}\right] \right. \\[0.5em]
\vee \left. \left[ {\vphantom{\frac{\int A}{B}}}X_{0}(v)\wedge \left( \
\exists v'\ (X_{0}(v')\wedge \mathcal{R}(v,v')\right) \wedge
\left( {\vphantom{\frac{\int A}{B}}}\exists r\left( X_{3}(r)\wedge \mathcal{R%
}(v,r)\right) \right. \right. \right. \\[0.5em]
\left. \left. \vee \ \exists r,s\left( X_{2}(r)\wedge X_{2}(s)\wedge
\mathcal{R}(v,r)\wedge \mathcal{R}(v,s){\vphantom{\frac{A}{B}}}\right)
\left. {\vphantom{\frac{\int A}{B}}}\right) {\vphantom{\frac{\int A}{B}}}%
\right] {\ \vphantom{\frac{\frac{A}{B}}{\frac{A}{B} }}}\right].
\end{array}
\end{equation*}

We observe that $\theta$ checks that the function is RDRD. The first three sub-clauses verify what happens when the label of the vertex $v$ is $3,2$ or $1$, while the last one checks that if the vertex $v$ has the label $0$, then the conditions for $g$ to be a DRD function are fulfilled, as well as there must be another vertex labeled with $0$ belonging to its neighborhood. Hence, $g$ is an RDRD function if and only if the expression $\theta$ is satisfied. This completes the proof.
\end{proof} 

\begin{corollary}
The RDRD problem can be solved in linear time for $k$-bounded clique-width graphs whenever either there exists a linear-time
algorithm to make a $k$-expression of the given graph or a $k$-expression is part of the input.
\end{corollary}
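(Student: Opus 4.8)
The plan is to derive this corollary as a direct consequence of the preceding theorem together with the meta-theorem of Courcelle et al. \cite{cmr} already invoked above. The preceding theorem shows that the RDRD problem can be cast as a $LinEMSOL(\tau_1)$ optimization problem, namely as the minimization of $|X_1|+2|X_2|+3|X_3|$ subject to the $MSOL(\tau_1)$ formula $\theta(X_0,\ldots,X_3)$. By the result of Courcelle et al., any such $LinEMSOL(\tau_1)$ problem admits a linear-time solution on the class of bounded clique-width graphs, provided that a $k$-expression of the input graph is supplied together with the graph. Thus the entire substantive work has already been done, and what remains is to reconcile the two alternative hypotheses appearing in the statement.

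First I would treat the case in which a $k$-expression is part of the input. Here one simply feeds the $k$-expression and the $LinEMSOL(\tau_1)$ formulation of the RDRD problem into the algorithm guaranteed by \cite{cmr}, which returns an optimal RDRD function (equivalently, the value $\gamma_{rdR}(G)$) in time linear in the size of the expression, hence linear in the size of $G$. Second, I would treat the case in which no expression is given but a linear-time algorithm producing a $k$-expression is available. Here one first runs that algorithm to obtain a $k$-expression of $G$ in linear time, and then proceeds exactly as in the first case.

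I do not anticipate a genuine obstacle here: the corollary is a routine specialization of the $LinEMSOL(\tau_1)$ membership established in the previous theorem, and the whole difficulty of the section was already absorbed into writing down the formula $\theta$. The only point requiring any care is to confirm, in the second case, that the cost of constructing the $k$-expression does not dominate the total running time. This is precisely guaranteed by the hypothesis that such a construction can be carried out in linear time, so that the two linear-time phases—building the $k$-expression and then running the Courcelle-type algorithm—compose to a single linear-time procedure. With both cases handled, the stated result follows.
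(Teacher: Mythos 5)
Your proposal is correct and matches the paper's (implicit) reasoning exactly: the corollary is stated without proof precisely because it is the routine combination of the preceding theorem (RDRD is a $LinEMSOL(\tau_1)$ problem) with the meta-theorem of Courcelle et al.\ \cite{cmr}, split into the two cases according to how the $k$-expression is obtained. Nothing is missing.
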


As bounded treewidth graphs are also bounded clique-width, the former corollary can be applied to this family of graphs. In particular, the problem of finding a $\gamma_{rdR}(T)$-function for a tree $T$ can be solved in linear time as it is well-known that every tree has clique-width at most three.

\subsection{Bounds}

As a consequence of Theorem \ref{Comp}, we conclude that the problem of computing the RDRD number even when restricted to planar graphs is NP-hard. Thus, it would be desirable to bound the RDRD number in terms of several invariants of the graph.

\begin{theorem}\label{Rest}
For any graph $G$ of order $n$ with maximum degree $\Delta$,
$$\gamma_{rdR}(G)\geq\frac{2n+(\Delta-2)\gamma_{r}(G)}{\Delta}.$$
This bound is sharp.
\end{theorem}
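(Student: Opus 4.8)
The plan is to start from an optimal RDRD function $f=(V_0,V_1,V_2,V_3)$ of $G$ and to extract from it a restrained dominating set while separately exploiting the double Roman conditions through an edge count. Write $n_i=|V_i|$, so that $n=n_0+n_1+n_2+n_3$ and $\gamma_{rdR}(G)=\omega(f)=n_1+2n_2+3n_3$. First I would observe that $S=V(G)\setminus V_0=V_1\cup V_2\cup V_3$ is a restrained dominating set: every $v\in V_0$ has a neighbor in $V_2\cup V_3\subseteq S$ by condition (a), so $S$ dominates $G$; and since $G[V_0]$ has no isolated vertices, every vertex of $V(G)\setminus S=V_0$ has a neighbor inside $V_0$, which is exactly the restrained requirement. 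Hence $\gamma_r(G)\le|S|=n-n_0$, that is $n_0\le n-\gamma_r(G)$.

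Next I would convert the double Roman conditions into the inequality $\Delta(n_2+2n_3)\ge 2n_0$. For each $v\in V_0$ let $a_v$ and $b_v$ denote its numbers of neighbors in $V_2$ and $V_3$, respectively. Condition (a) at $v$ (at least two neighbors in $V_2$ or one in $V_3$) is equivalent to $a_v+2b_v\ge 2$, so summing over $V_0$ gives $\sum_{v\in V_0}(a_v+2b_v)\ge 2n_0$. The left-hand side counts the edges between $V_0$ and $V_2$ once and those between $V_0$ and $V_3$ twice; since each vertex of $V_2$ (resp. $V_3$) sends at most $\Delta$ edges into $V_0$, this sum is at most $\Delta n_2+2\Delta n_3$. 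Therefore $\Delta(n_2+2n_3)\ge 2n_0$.

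Then I would combine the two ingredients algebraically. A direct expansion shows that $\Delta(n_2+2n_3)\ge 2n_0$ is equivalent to $\Delta\,\omega(f)+(\Delta-2)n_0\ge\Delta n$, i.e. $\Delta\,\gamma_{rdR}(G)\ge\Delta n-(\Delta-2)n_0$. Assuming $\Delta\ge2$ (the cases $\Delta\le1$, where $G$ is a disjoint union of copies of $K_1$ and $K_2$, are checked directly), the coefficient $\Delta-2$ is nonnegative, so I may replace $n_0$ by its upper bound $n-\gamma_r(G)$ to get $\Delta\,\gamma_{rdR}(G)\ge\Delta n-(\Delta-2)\bigl(n-\gamma_r(G)\bigr)=2n+(\Delta-2)\gamma_r(G)$, which is the claim after dividing by $\Delta$.

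For sharpness I would exhibit the complete graphs $K_n$ with $n\ge3$: there $\gamma_r(K_n)=1$ and $\gamma_{rdR}(K_n)=3$, and one checks $\frac{2n+(n-3)\cdot 1}{n-1}=3$, so equality holds (assigning $3$ to one vertex and $0$ to the rest makes every inequality above tight). The step I expect to be the most delicate is getting the direction of the final substitution right: $\gamma_r$ enters with coefficient $(\Delta-2)$, so passing from $n_0$ to $n-\gamma_r(G)$ preserves the inequality only because $\Delta\ge2$, and it is precisely here that the small-$\Delta$ cases must be isolated. The edge-counting step is routine but relies on the clean reformulation of condition (a) as $a_v+2b_v\ge2$ to avoid an awkward case split.
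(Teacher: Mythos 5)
Your proof is correct and follows essentially the same route as the paper's: the key inequality $2|V_0|\le\Delta(|V_2|+2|V_3|)$ (which you obtain by a weighted edge count rather than the paper's partition of $V_0$ into vertices with a $V_3$-neighbor and the rest), the observation that $V_1\cup V_2\cup V_3$ is a restrained dominating set, and the same algebraic combination, where your explicit remark that $\Delta\ge 2$ is needed for the final substitution is a point the paper leaves implicit. Your sharpness example $K_n$ ($n\ge3$) is simpler than, but just as valid as, the paper's cycle-based construction.
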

\begin{proof}
Let $f=(V_{0},V_{1},V_{2},V_{3})$ be a $\gamma_{rdR}(G)$-function. Let $A=\{v\in V_{0}\mid \mbox{$v$ has a neighbor in $V_{3}$}\}$ and $A'=V_{0}\setminus A$. Because every vertex in $V_{3}$ has at most $\Delta$ neighbors in $A$ and every vertex in $A$ has at least one neighbor in $V_3$, it follows that $|A|\leq \Delta|V_{3}|$. On the other hand, every vertex in $A'$ is adjacent to at least two vertices in $V_{2}$. Moreover, each vertex of $V_{2}$ has at most $\Delta$ neighbors in $A'$. Together these two statements imply that $2|A'|\leq \Delta|V_{2}|$. Using now the last two inequalities, we get $2|V_{0}|=2|A|+2|A'|\leq \Delta(|V_{2}|+2|V_{3}|)$.

Since $\gamma_{rdR}(G)=\omega(f)=|V_{1}|+2|V_{2}|+3|V_{3}|$ and $|V_{0}|+|V_{1}|+|V_{2}|+|V_{3}|=n$, we have
\begin{equation}\label{INE2}
\begin{array}{lcl}
\Delta \gamma_{rdR}(G)&=&\Delta(|V_{1}|+2|V_{2}|+3|V_{3}|)=\Delta(|V_{1}|+|V_{2}|+|V_{3}|)+\Delta(|V_{2}|+2|V_{3}|)\\
&\geq& \Delta(|V_{1}|+|V_{2}|+|V_{3}|)+2|V_{0}|=(\Delta-2)(|V_{1}|+|V_{2}|+|V_{3}|)+2n.
\end{array}
\end{equation}

On the other hand, $V_{1}\cup V_{2}\cup V_{3}$ is an RD set in $G$. Therefore, $|V_{1}|+|V_{2}|+|V_{3}|\geq \gamma_{r}(G)$. Together this and the inequality (\ref{INE2}) lead to the desired lower bound on $\gamma_{rdR}(G)$.

That the bound is sharp, may be seen as follows. Let $s$, $p$ and $q$ be three positive integers, where $s\geq4$. We consider the cycle $C_{t}:v_{1}v_{2}\cdots v_{t}v_{1}$ on $t=(p+q)s$ vertices. Let $H$ be the graph obtained from $C_{t}$ by adding the new vertices $x_{1},\cdots,x_{p},y_{1},\cdots,y_{q},z_{1},\cdots,z_{q}$, and setting $N_{H}(x_{i})=\{v_{(i-1)s+1},\cdots,v_{is}\}$ and $N_{H}(y_{j})=N_{H}(z_{j})=\{v_{(p+j-1)s+1},\cdots,v_{(p+j)s}\}$ for each $1\leq i\leq p$ and $1\leq j\leq q$. It is easily checked that $h(x_{i})=3$ and $h(y_{j})=h(z_{j})=2$ for all $1\leq i\leq p$ and $1\leq j\leq q$, and $h(v)=0$ for any other vertex $v$ is an RDRD function of $H$ with weight $3p+4q$. Therefore, $\gamma_{rdR}(H)\leq3p+4q$. Using $|V(H)|=(p+q)s+p+2q$, $\Delta(H)=s$, $\gamma_{r}(H)=p+2q$ and applying the lower bound on $\gamma_{rdR}(H)$, we have $\gamma_{rdR}(H)\geq3p+4q$. Therefore, the lower bound gives the exact value for $\gamma_{rdR}(H)$.
\end{proof}

There is also a simple but strong relationship between the RDRD number and the domination number of a graph.

\begin{proposition}
For any graph $G$ of order $n$, $\gamma_{rdR}(G)\leq n+\gamma(G)$. This bound is sharp. 
\end{proposition}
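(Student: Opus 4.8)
The plan is to prove the bound by exhibiting an explicit RDRD function of weight exactly $n+\gamma(G)$, and then to isolate a family of graphs for which this weight cannot be lowered. The upper bound itself requires essentially no work once the right function is written down; the real content of the statement is the sharpness.

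For the upper bound I would start from a $\gamma(G)$-set $D$ and define $f$ by $f(v)=2$ for every $v\in D$ and $f(v)=1$ for every $v\in V(G)\setminus D$. Its weight is $2|D|+(n-|D|)=n+\gamma(G)$, so it only remains to check that $f$ is an RDRD function. The key observation is that no vertex receives the value $0$, so $V_{0}^{f}=\emptyset$: consequently condition (a) in the definition of a DRD function is vacuous, and the induced subgraph $G[V_{0}^{f}]$ is empty and hence has no isolated vertex, so the restrained condition holds automatically. The only genuine verification is condition (b): each vertex $v$ with $f(v)=1$ lies outside $D$, and because $D$ is dominating, $v$ has a neighbor in $D=V_{2}^{f}\subseteq V_2^f\cup V_3^f$. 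Thus $f$ is an RDRD function and $\gamma_{rdR}(G)\le n+\gamma(G)$.

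For sharpness I would use the stars $K_{1,n-1}$ with $n\ge 2$, for which $\gamma=1$ and the upper bound already gives $\gamma_{rdR}(K_{1,n-1})\le n+1$; the task is then the matching lower bound. Let $f$ be any RDRD function, with center $c$ and leaves $\ell_1,\dots,\ell_{n-1}$. By Observation~\ref{leaf}(i) every leaf lies in $V_1\cup V_2\cup V_3$ and so contributes at least $1$, while $f(c)\ge 1$ as well, since $f(c)=0$ would force (via the restrained condition) a neighbor of $c$ in $V_0$, impossible because all leaves carry positive weight. Hence $\omega(f)\ge n$, and $\omega(f)=n$ would require every vertex to receive exactly $1$; but then each leaf of value $1$ has its unique neighbor $c$ of value $1\notin V_2\cup V_3$, violating condition (b). Therefore $\omega(f)\ge n+1$, giving $\gamma_{rdR}(K_{1,n-1})=n+1=n+\gamma(K_{1,n-1})$.

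The argument is short, and the one place demanding care is confirming that the constructed $f$ meets all three requirements simultaneously; the restrained constraint, usually the delicate ingredient, is here satisfied for free precisely because $V_0^f=\emptyset$. The only real obstacle is thus not the bound but the sharpness: one must find a family in which the restrained condition blocks the usual economy of assigning $0$ to many vertices, and the stars are the cleanest such example since every leaf is forced into $V_1\cup V_2\cup V_3$, leaving no room to improve on $n+\gamma$.
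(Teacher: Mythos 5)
Your proof of the upper bound is exactly the paper's: assign $2$ on a minimum dominating set and $1$ elsewhere, note that $V_0^f=\emptyset$ makes both condition (a) and the restrained condition vacuous, and verify condition (b) from domination. Where you diverge is only in the sharpness witness. The paper takes an arbitrary graph $H$ with no isolated vertices and attaches two pendant vertices to each vertex of $H$, obtaining $H'$ with $\gamma_{rdR}(H')=4|V(H)|=|V(H')|+\gamma(H')$; this produces a large and structurally varied family of extremal graphs. You instead use the stars $K_{1,n-1}$, and your lower-bound argument for them is correct: every leaf receives at least $1$ (a leaf in $V_0$ would need either two neighbors in $V_2$ or a neighbor in $V_3$ together with a neighbor in $V_0$, impossible with a single neighbor), the center cannot be in $V_0$ since the restrained condition would force a $V_0$-neighbor among the leaves, and weight exactly $n$ would put every vertex in $V_1$, violating condition (b) at the leaves. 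This matches the paper's own Corollary~\ref{cor-tree}, which asserts $\gamma_{rdR}(T)\geq n+1$ with equality precisely for stars, so your example is simpler and entirely adequate for the claim of sharpness; the paper's construction buys more, namely an infinite family of non-tree extremal graphs of arbitrarily large minimum degree inside $H$. Both routes are valid.
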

\begin{proof}
For any $\gamma(G)$-set $S$, we define $f:V(G)\rightarrow\{0,1,2,3\}$ by $f(u)=2$ and $f(v)=1$ for each $u\in S$ and $v\in V(G)\setminus S$. It is easily observed that $f$ is an RDRD function of $G$. So, $\gamma_{rdR}(G)\leq \omega(f)=n+|S|=n+\gamma(G)$.

For any graph $H$ with no isolated vertices on the set of vertices $\{v_{1},\cdots,v_{n}\}$, let $H'$ be obtained from $H$ by joining two new vertices $v_{i1}$ and $v_{i2}$ to $v_{i}$ for each $1\leq i\leq n$. It is then easy to see that $f'(x)=0$ and $f'(y)=2$, for each $x\in V(H)$ and $y\in V(H')\setminus V(H)$, is a $\gamma_{rdR}(H')$-function. Hence, $\gamma_{rdR}(H')=4|V(H)|=|V(H')|+\gamma(H')$. So, the upper bound is sharp.   
\end{proof}

For regular graphs of girth greater than or equal to six, we show the following upper bound.

\begin{proposition}\label{regul}
Let $r\geq 3$ be an integer and let $G$ be an $r$-regular graph of order $n$ with girth at least six. Then, $\gamma_{rdR}(G)\leq 2(n-r^{2})+1$.
\end{proposition}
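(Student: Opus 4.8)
The statement is $\gamma_{rdR}(G)\le 2(n-r^2)+1$ for an $r$-regular graph $G$ with $r\ge 3$ and girth at least six. Let me think about what the bound is telling me and how to construct a witness RDRD function.

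First, the quantity $n-r^2$ suggests we should look at a vertex together with its first and second neighborhoods. Pick a vertex $u$. Since $G$ is $r$-regular, $|N(u)|=r$. Each neighbor $w\in N(u)$ has $r-1$ neighbors other than $u$. The girth-at-least-six hypothesis is the crucial structural ingredient: it forces the second neighborhood to be as large as possible and, in particular, to be a disjoint union over the neighbors. Let me spell this out.

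Let me set $B=N(u)$ (the first neighborhood) and $C=N(N(u))\setminus(\{u\}\cup B)$ (the second neighborhood). Girth $\ge 6$ means no cycles of length $3,4,5$. No triangles ($C_3$) means $B$ is an independent set. No $C_4$ means any two vertices of $B$ have no common neighbor other than $u$; equivalently, the sets $N(w)\setminus\{u\}$ for $w\in B$ are pairwise disjoint. No $C_5$ means there are no edges within $C$ that would close a $5$-cycle through $u$ — more carefully, two vertices in $C$ lying below distinct neighbors $w,w'$ cannot be adjacent (such an edge plus the two paths to $u$ would give a $C_5$). Hence $C$ partitions into $r$ blocks $C_w:=N(w)\setminus\{u\}$, each of size exactly $r-1$, pairwise disjoint and with no edges between distinct blocks. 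Consequently $|B|+|C|=r+r(r-1)=r^2$, so $u$ together with its first and second neighborhoods occupies $r^2+1$ vertices.

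The plan is now to define $f$ by assigning value $0$ to $u$ and to every vertex of the second neighborhood $C$, assigning $2$ (or enough to dominate) to the vertices of $B$, value $3$ to $u$... let me reconsider to hit the weight exactly. I would set $f(u)=3$, $f(w)=0$ for every $w\in B$, and $f$ on $C$ chosen to satisfy the DRD and restrained conditions cheaply, while every vertex outside $N[u]\cup C$ gets value $2$. The key point: the $r^2$ vertices in $B\cup C$ contribute little to the weight, while each of the remaining $n-r^2-1$ vertices contributes $2$, giving weight $\approx 2(n-r^2-1)+(\text{small})$, which I must reconcile with the target $2(n-r^2)+1$. The vertices of $B$ have value $0$ and are dominated by $u\in V_3$, so condition (a) holds; but the restrained condition requires each $0$-vertex to have a $0$-neighbor, so I must ensure every $w\in B$ has a neighbor valued $0$ — this is exactly why $C$ should also carry value $0$ in each block, and why I then need the block vertices in $C$ to themselves be paired off or dominated. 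The hardest part will be making the second neighborhood $C$ simultaneously satisfy the DRD domination requirement (every $0$-vertex needs a $3$-neighbor or two $2$-neighbors) and the restrained no-isolated-$0$ requirement, \emph{without} blowing up the weight; this is where the girth-six block structure earns its keep, because within each block $C_w$ I can afford to promote one vertex to value $2$ or $3$ to dominate the rest of its block while leaving a $0$-$0$ edge intact, and the disjointness of blocks guarantees these local fixes do not interfere. Once the construction is fixed I would simply tally the weight, verify it equals (or is bounded by) $2(n-r^2)+1$, and check the two DRD conditions plus the restrained condition case by case on $u$, $B$, $C$, and the remaining vertices.
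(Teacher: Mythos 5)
Your construction is the same as the paper's: pick a vertex $u$, assign it $3$, assign $0$ to its first and second neighborhoods (which together contain exactly $r+r(r-1)=r^{2}$ vertices by the girth condition), and assign $2$ to everything else, for weight $3+2(n-1-r^{2})=2(n-r^{2})+1$. However, the step you flag as ``the hardest part'' is exactly where your plan breaks. You propose to promote one vertex of each block $C_{w}=N(w)\setminus\{u\}$ to value $2$ or $3$ so as to ``dominate the rest of its block.'' This fails on three counts. First, it is unnecessary: girth at least six forces $C$ to be an independent set (an edge inside a block creates a triangle with $w$; an edge between blocks creates a $C_{5}$ through $u$) and forces each $c\in C$ to have exactly one neighbor in $B=N(u)$ (two would create a $C_{4}$), so every $c\in C$ has $r-1\geq 2$ neighbors outside $\{u\}\cup B\cup C$, all of which carry value $2$; hence condition (a) of the DRD definition already holds for $c$, and the restrained condition holds via its $0$-valued neighbor in $B$. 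Second, the promotion would not accomplish anything: a promoted vertex of $C_{w}$ is adjacent to no other vertex of $C_{w}$, precisely because blocks are independent --- a fact your girth analysis omits (you rule out edges between distinct blocks but not within a block). Third, and fatally for the bound, promoting one vertex per block from $0$ to $2$ adds $2r$ to the weight, giving $2(n-r^{2})+1+2r$, which overshoots the claimed bound. The verification you are missing, and the one the paper gives, is that the exterior $V_{2}$ vertices --- not anything inside the radius-two ball --- dominate the second neighborhood; this is exactly where the hypotheses $r\geq 3$ (so that $r-1\geq 2$) and girth at least six are used.
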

\begin{proof}
Let $y$ be a vertex of $V(G)$ and consider the function $f$ defined by $f(y)=3$, $f(t)=0$ for all $t\in N(y)\cup(N\big{(}N(y)\big{)}\setminus\{y\})$ and $f(v)=2$ for any other vertex $v$. We observe that any vertex in $N(y)\subseteq V_{0}$ is adjacent to $y\in V_{3}$ and it is also adjacent to at least one vertex in $N\big{(}N(y)\big{)}\setminus\{y\}\subseteq V_{0}$. In addition, since $r\geq 3$ and because each cycle in $G$ has length greater than or equal to six, any vertex in $N\big{(}N(y)\big{)}\setminus\{y\}\subseteq V_{0}$ is adjacent to at least two vertices in $V_{2}$. Hence, $f$ is an RDRD function in $G$ and we have that
$$\gamma_{rdR}(G)\leq \omega(f)=2\big{(}n-(r+1)-r(r-1)\big{)}+3=2n-2r^{2}+1.$$
\end{proof}

To see that the bound is tight, we can consider the Heawood graph $H$ (depicted in Figure \ref{Heawood}) with $14$ vertices and girth six. It is readily seen that $\gamma_{r}(H)=4$. We then, by Theorem \ref{Rest}, have $\gamma_{rdR}(H)\geq 11$. Now, by taking Proposition \ref{regul} into account, we deduce that $\gamma_{rdR}(H)=11$.

It is important to note that the requirement about the girth of the graph in Proposition \ref{regul} cannot be relaxed. To see this, we consider the Petersen graph $G$ with girth $5$. Using the upper bound given in Proposition \ref{regul}, we have $\gamma_{rdR}(G)\leq 3$, which is impossible.

\begin{figure}[h]
\tikzstyle{every node}=[circle, draw, fill=white!, inner sep=0pt,minimum width=.16cm]
\begin{center}
\begin{tikzpicture}[thick,scale=.6]
  \draw(0,0) { 

+(-0.75,0) node{}
+(-2,-0.65) node{}
+(-2.9,-1.7) node{}
+(-3.3,-2.9) node{}
+(-2.9,-4.1) node{}
+(-2,-5.2) node{}
+(-0.75,-6) node{}

+(0.75,0) node{}
+(2,-0.65) node{}
+(2.9,-1.7) node{}
+(3.3,-2.9) node{}
+(2.9,-4.1) node{}
+(2,-5.2) node{}
+(0.75,-6) node{}

+(-0.75,0) -- +(-2,-0.65) -- +(-2.9,-1.7) -- +(-3.3,-2.9) -- +(-2.9,-4.1) -- +(-2,-5.2) -- +(-0.75,-6) -- +(0.75,-6) -- +(2,-5.2) -- +(2.9,-4.1) -- +(3.3,-2.9) -- +(2.9,-1.7) -- +(2,-0.65) -- +(0.75,0) -- +(-0.75,0)

+(0.75,0) -- +(-2.9,-4.1)
+(2,-0.65) -- +(0.75,-6)
+(2.9,-1.7) -- +(-2.9,-1.7)
+(3.3,-2.9) -- +(-2,-5.2)
+(2.9,-4.1) -- +(-0.75,0)
+(2,-5.2) -- +(-3.3,-2.9)
+(-0.75,-6) -- +(-2,-0.65)

};
\end{tikzpicture}
\end{center}
\caption{The Heawoon graph $H$}\label{Heawood}
\end{figure}
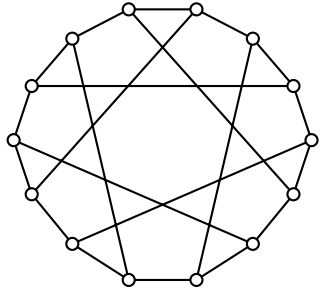 

It has been shown in \cite{mmv} that for every graph $G$, $\gamma_{rdR}(G)\leq 2\gamma_{rR}(G)$ with equality if and only if $G\cong \overline{K_{p}}$. Consequently, if $G$ is a nontrivial graph, then $\gamma_{rdR}(G)\leq 2\gamma_{rR}(G)-1$. Our next result shows that $K_{2}$ is the only nontrivial connected triangle-free graph satisfying $\gamma_{rdR}(G)=2\gamma_{rR}(G)-1$.

\begin{proposition}\label{free}
If $G$ is a connected triangle-free graph on $n\geq3$ vertices, then $\gamma_{rdR}(G)\leq 2\gamma_{rR}(G)-2$.
\end{proposition}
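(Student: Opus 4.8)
The plan is to start from a minimum RRD function and transform it into an RDRD function whose weight is controlled by the number of $2$'s it uses. Given any RRD function $g=(V_0^g,V_1^g,V_2^g)$ of $G$, I would define $f$ on $V(G)$ by $f(v)=3$ if $g(v)=2$, $f(v)=2$ if $g(v)=1$, and $f(v)=0$ if $g(v)=0$. A direct verification shows $f$ is an RDRD function: a vertex with $f(v)=0$ is precisely a vertex of $V_0^g$, so by the Roman condition it has a neighbor of $g$-value $2$ (hence $f$-value $3$), which satisfies condition (a), and by the restrained condition on $g$ it has a neighbor in $V_0^g=V_0^f$; no vertex receives value $1$, so condition (b) is vacuous, and $G[V_0^f]=G[V_0^g]$ has no isolated vertices. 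Since $\omega(f)=2|V_1^g|+3|V_2^g|=2\omega(g)-|V_2^g|$, taking $g$ to be a $\gamma_{rR}(G)$-function yields
\[
\gamma_{rdR}(G)\le 2\gamma_{rR}(G)-|V_2^g|.
\]
Thus it suffices to exhibit a $\gamma_{rR}(G)$-function with at least two vertices of value $2$.

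The second step is the structural heart of the argument, and it is where triangle-freeness enters. I would show that no $\gamma_{rR}(G)$-function uses exactly one vertex of value $2$. Suppose $g$ were such a function with unique $2$-vertex $u$. Every vertex of $V_0^g$ must be adjacent to a $2$-vertex, hence lies in $N(u)$; and since $G$ is triangle-free, $N(u)$ is independent. Consequently a vertex $x\in V_0^g$ can have no neighbor in $V_0^g$ (such a neighbor would lie in $N(u)$ and be adjacent to $x$, forming a triangle with $u$), violating the restrained condition unless $V_0^g=\emptyset$. But then $g$ assigns $1$ to the remaining $n-1$ vertices, giving weight $n+1$, which exceeds the weight $n$ of the all-ones RRD function and contradicts minimality. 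Hence every $\gamma_{rR}(G)$-function uses either zero or at least two $2$'s.

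Finally I would split into two cases. If some $\gamma_{rR}(G)$-function uses at least two $2$'s, the displayed inequality gives $\gamma_{rdR}(G)\le 2\gamma_{rR}(G)-2$ at once. Otherwise every $\gamma_{rR}(G)$-function uses zero $2$'s; but a Roman dominating function with no $2$'s can have no $0$'s, so such a function assigns $1$ everywhere and $\gamma_{rR}(G)=n$. In this case I would invoke the bound $\gamma_{rdR}(G)\le n+\gamma(G)$ established earlier together with the fact that $\gamma(G)\le n-2$ for every connected graph on $n\ge3$ vertices (using $\gamma(G)\le n/2\le n-2$ when $n\ge4$, and $G\cong P_3$ with $\gamma(P_3)=1=n-2$ when $n=3$), obtaining $\gamma_{rdR}(G)\le n+(n-2)=2n-2=2\gamma_{rR}(G)-2$.

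The main obstacle is the middle step: ruling out a single $2$ in a minimum RRD function. The triangle-free hypothesis is exactly what forbids a $0$-vertex from locating its required $V_0$-partner among the (independent) neighbors of the unique $2$-vertex, and the comparison against the all-ones function is what converts ``no $0$-vertices'' into a contradiction with minimality. I would double-check the boundary case $n=3$, where $P_3$ attains $\gamma_{rdR}=4=2\gamma_{rR}-2$, both to confirm the domination estimate is tight there and to see that the bound is best possible.
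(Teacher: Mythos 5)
Your proof is correct. Its engine is the same as the paper's: the relabeling $0\mapsto 0$, $1\mapsto 2$, $2\mapsto 3$ of an RRD function $g$, which yields $\gamma_{rdR}(G)\le 2\gamma_{rR}(G)-|V_2^g|$, and the observation that a unique vertex of value $2$ together with an adjacent pair of $0$-vertices in its neighborhood would force a triangle. The difference is organizational and lies in how the degenerate case is closed. The paper argues by contradiction from the assumed equality $\gamma_{rdR}(G)=2\gamma_{rR}(G)-1$, fixes a minimum RRD function maximizing $|V_2|$, deduces $|V_2|\le 1$ from that equality, and in the subcase $V_2=\emptyset$ asserts structurally that $G$ must be a star (then checks stars directly); in the subcase $|V_2|=1$ with $V_0=\emptyset$ it relabels the $2$ to a $1$. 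You instead prove unconditionally that no minimum RRD function has exactly one $2$ (your triangle argument forces $V_0=\emptyset$, and the all-ones function then beats it), and in the remaining case where every minimum RRD function is the all-ones function, so $\gamma_{rR}(G)=n$, you finish with the paper's earlier bound $\gamma_{rdR}(G)\le n+\gamma(G)$ together with the classical estimate $\gamma(G)\le n/2\le n-2$ for $n\ge 4$ (and the $P_3$ check for $n=3$). This sidesteps the paper's rather terse claim that $V_2=\emptyset$ forces $G$ to be a star, at the modest cost of importing Ore's bound $\gamma(G)\le n/2$; both routes are sound, and yours is arguably easier to verify line by line.
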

\begin{proof}
Assume that $\gamma_{rdR}(G)=2\gamma_{rR}(G)-1$. Among the whole of $\gamma_{rR}(G)$-functions, let $f=(V_{0},V_{1},V_{2})$ be the one for which $|V_{2}|$ is maximized. Since $(V_{0},\emptyset,V_{1},V_{2})$ is an RDRD function of $G$, it follows that $2\gamma_{rR}(G)-1=\gamma_{rdR}(G)\leq 2|V_{1}|+3|V_{2}|$. Using the fact that $\gamma_{rR}(G)=|V_{1}|+2|V_{2}|$, we deduce that $|V_{2}|\leq 1$. If $V_{2}=\emptyset $, we must have $V_{0}=\emptyset$ and $V_{1}=V(G)$. By our choice of $f$ and the connectedness of $G$, we conclude that $n\leq 3$ (and thus $n=3$) or $G$ is a star. Since $G$ is triangle-free, we have $G\cong K_{1,n-1}$. But then $\gamma_{rdR}(K_{1,n-1})\neq 2\gamma_{rR}(K_{1,n-1})-1$, a contradiction. Hence, we have $|V_{2}|=1$ with $V_{2}=\{x\}$. If $V_{0}=\emptyset $, then relabeling $x$ with the value $1$ instead of $2$ gives us an RRD function with weight $\gamma_{rR}(G)-1$, a contradiction. Hence $V_{0}\neq \emptyset$, and thus $x$ is adjacent to all vertices in $V_{0}$. But then $x$ along with any two adjacent vertices from $V_0$ induce a triangle, a contradiction. So, the desired upper bound holds.
\end{proof}

Note that the condition ``triangle-free" is necessary in Proposition \ref{free}. For instance, the upper bound does not hold for $K_{1} \vee pK_{2}$, in which $p\geq1$.

We now relate the RDRD number to the domination and restrained domination numbers for each graph. Recall that
a subset $S\subseteq V(G)$ is a {\em $2$-dominating set} if each vertex of $V(G)\setminus S$ has at least two neighbors in $S$. A $2$-dominating set $S$ of $G$ with the additional property that the induced subgraph $G[V(G)\setminus S]$ has no isolated vertices is called a {\em restrained 2-dominating set} (R$2$D set for short) of $G$. The (resp. {\em restrained}) $2$-{\em domination number} \big{(}resp. $\gamma_{r2}(G)$\big{)} $\gamma_{2}(G)$ equals the minimum cardinality of a (resp. restrained) $2$-dominating set in $G$.

\begin{theorem}\label{frame}
For every connected graph $G$,
$$\gamma_{rdR}(G)\geq \gamma(G)+\gamma_{r}(G).$$
Moreover, the equality holds if and only if $G$ is a star or $\gamma_{r2}(G)=\gamma_{r}(G)=\gamma(G)$.
\end{theorem}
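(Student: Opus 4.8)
The plan is to prove the inequality by the same two-set argument used elsewhere in this paper, and then to read off exactly what equality demands of an optimal function. First I would take a $\gamma_{rdR}(G)$-function $f=(V_0,V_1,V_2,V_3)$ and record two facts: $V_2\cup V_3$ is a dominating set (each vertex of $V_0$ has a neighbour in $V_2\cup V_3$ by the double Roman condition, and each vertex of $V_1$ has one by condition (b)), so $|V_2|+|V_3|\ge\gamma(G)$; and $V_1\cup V_2\cup V_3$ is a restrained dominating set (it dominates $V_0$, and $G[V_0]$ has no isolated vertices), so $|V_1|+|V_2|+|V_3|\ge\gamma_r(G)$. Writing
\[
\gamma_{rdR}(G)=|V_1|+2|V_2|+3|V_3|=\big(|V_1|+|V_2|+|V_3|\big)+\big(|V_2|+2|V_3|\big)
\]
and using $|V_2|+2|V_3|\ge|V_2|+|V_3|\ge\gamma(G)$ yields $\gamma_{rdR}(G)\ge\gamma_r(G)+\gamma(G)$.

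Equality forces both bracketed terms to be tight. From $|V_2|+2|V_3|=\gamma(G)\le|V_2|+|V_3|$ I get $V_3=\emptyset$ and $|V_2|=\gamma(G)$, while tightness of the first term says that $V_1\cup V_2$ is a \emph{minimum} restrained dominating set, $|V_1|+|V_2|=\gamma_r(G)$. These three facts hold for every optimal function in the equality case and drive the whole characterisation. For the ``if'' direction I would argue separately: a direct computation gives $\gamma_{rdR}(K_{1,n-1})=n+1=\gamma+\gamma_r$ (since $\gamma=1$, $\gamma_r=n$), and if $\gamma_{r2}(G)=\gamma_r(G)=\gamma(G)$ then assigning $2$ to a minimum restrained $2$-dominating set $S$ and $0$ elsewhere is an RDRD function of weight $2|S|=2\gamma(G)=\gamma(G)+\gamma_r(G)$, matching the bound.

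For the ``only if'' direction I would fix an optimal $f$ with $|V_0|$ \emph{maximum}; by Observation \ref{leaf}(ii), $V_1$ is independent and $N(V_1)\subseteq V_2$. If $V_1=\emptyset$, then every vertex of $V_0$ has at least two neighbours in $V_2$ and $G[V_0]$ has no isolated vertices, so $V_2$ is a restrained $2$-dominating set of size $\gamma(G)$; combined with $\gamma\le\gamma_r\le\gamma_{r2}$ this forces $\gamma_{r2}=\gamma_r=\gamma$. The remaining, and main, case is $V_1\neq\emptyset$, where I must show $G$ is a star.

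The hard part will be this last case, and the key lever is that $V_1\cup V_2$ is a minimum (hence minimal) restrained dominating set, so nothing can be discarded from it. First I would show $V_0=\emptyset$: deleting $u\in V_2$ from $V_1\cup V_2$ leaves a restrained dominating set exactly when $u$ has a neighbour in $V_0$ \emph{and} a neighbour in $(V_1\cup V_2)\setminus\{u\}$, so minimality forbids any vertex of $V_2$ from having both; since every vertex of $V_1$ supplies its $V_2$-neighbours with a neighbour in $(V_1\cup V_2)$, those neighbours can have no neighbour in $V_0$, whence $V_0$ together with its $V_2$-dominators forms a set with no edges to $V_1$, contradicting connectedness unless $V_0=\emptyset$. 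With $V_0=\emptyset$ one has $\gamma_r(G)=n$, and I would finish with two pair-removal arguments: if some $z\in V_1$ had two neighbours in $V_2$, or if the (necessarily connected) graph $G[V_2]$ had a vertex of degree at least two, then either an adjacent pair can be removed from $V_1\cup V_2=V(G)$ to produce a smaller restrained dominating set, or two pendant vertices of $V_2$ can be deleted (after possibly rerouting through a $V_1$-vertex) to produce a smaller dominating set --- both impossible. This forces every vertex of $V_1$ to be a leaf and $|V_2|=1$, and hence $G=K_{1,n-1}$.
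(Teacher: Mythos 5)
Your lower bound, your ``if'' direction, and most of your ``only if'' direction coincide with the paper's proof: the same decomposition $\gamma_{rdR}(G)=(|V_1|+|V_2|+|V_3|)+(|V_2|+2|V_3|)$, the same consequences of equality ($V_3=\emptyset$, $|V_2|=\gamma(G)$, $|V_1|+|V_2|=\gamma_r(G)$), and the same use of the minimality of $V_1\cup V_2$ as a restrained dominating set to show that no vertex of $V_2$ can have both a neighbour in $V_0$ and a neighbour in $(V_1\cup V_2)$, which separates $G$ into two pieces with no edges between them and forces $V_0=\emptyset$ when $V_1\neq\emptyset$. (The paper obtains the ``no $V_1$--$V_0$ edges'' fact by a relabelling argument rather than via Observation \ref{leaf}(ii); that is a cosmetic difference. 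Note also that the connectedness contradiction requires no edges from $V_0$ and its $V_2$-dominators to the \emph{rest of $V_2$} as well, not only to $V_1$, but this follows from the same minimality lemma you already invoked.)

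The genuine gap is in your final step. First, the parenthetical claim that $G[V_2]$ is ``necessarily connected'' is false: two vertices of $V_2$ may be joined only through a vertex of $V_1$, so nothing resting on that claim is supported. Second, even granting your two exclusions --- no $z\in V_1$ with two neighbours in $V_2$, and no vertex of degree at least two in $G[V_2]$ --- you only get that every vertex of $V_1$ is a leaf and that $G[V_2]$ has maximum degree at most one; this does not yield $|V_2|=1$. In particular the configuration $G[V_2]\cong K_2$ with $V_1$-leaves attached to both endpoints (a double star) survives your case analysis, and the phrase ``two pendant vertices of $V_2$ can be deleted after possibly rerouting'' is not an argument. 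The repair is the one the paper uses and is shorter than what you attempted: since $V_0=\emptyset$ you have $\gamma_r(G)=|V_1|+|V_2|=n$; if $G$ contained an edge $uv$ with $\deg(u)\geq 2$ and $\deg(v)\geq 2$, then $V(G)\setminus\{u,v\}$ would be a restrained dominating set of cardinality $n-2$ (each of $u,v$ is dominated by a third neighbour, and they are adjacent to each other outside the set), a contradiction. Hence every edge of the connected graph $G$ has a leaf endpoint, which forces $G$ to be a star.
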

\begin{proof}
Let $f=(V_{0},V_{1},V_{2},V_{3})$ be a $\gamma_{rdR}(G)$-function. Obviously, $V(G)\setminus V_{0}$ is an RD set and $V_{2}\cup V_{3}$ is a dominating set of $G$. Because $\gamma_{rdR}(G)=|V_{1}|+2|V_{2}|+3|V_{3}|,$ we deduce that
\begin{equation}\label{aa}
\gamma_{rdR}(G)=(|V_{1}|+|V_{2}|+|V_{3}|)+(|V_{2}|+|V_{3}|)+|V_{3}|\geq \gamma_{r}(G)+\gamma(G)+|V_{3}|\geq \gamma_{r}(G)+\gamma(G),  
\end{equation}
and the lower bound follows.

Clearly, the equality holds if $G$ is isomorphic to a star. Suppose now that $\gamma_{r2}(G)=\gamma_{r}(G)=\gamma(G)$. For any $\gamma_{r2}(G)$-set $S$, the function $f(x)=2$ for $x\in S$ and $f(y)=0$ for each $y\in V(G)\setminus S$ defines an RDRD function of $G$. So, $\gamma_{rdR}(G)\leq \omega(f)=2|S|=2\gamma_{r2}(G)=\gamma_{r}(G)+\gamma(G)$. By (\ref{aa}), we get $\gamma_{rdR}(G)=\gamma(G)+\gamma_{r}(G)$, as desired.

Conversely, assume that $\gamma_{rdR}(G)=\gamma(G)+\gamma_{r}(G)$. We deduce from (\ref{aa}) that $V_{3}=\emptyset$, $|V_{1}|+|V_{2}|=\gamma_{r}(G)$ and $|V_{2}|=\gamma(G)$. Hence, we may assume that for any $\gamma_{rdR}(G)$-function $f$, no vertex is labeled with $3$ under $f$. We proceed with some claims.\vspace{1mm}\\
Claim $1$. \textit{No edge of $G$ joins $V_{1}$ and $V_{0}$.}\\
\textit{Proof of Claim $1$.} Suppose that there is an edge $uv\in E(G)$ for which $u\in V_{1}$ and $v\in V_{0}$. By definition, $u$ has a neighbor $w\in V_{2}$. Then, the function $g$ defined on $V(G)$ by $g(w)=3$, $g(u)=0$ and $g(x)=f(x)$ otherwise, is a $\gamma_{rdR}(G)$-function, which contradicts our earlier assumption. $(\square)$\vspace{1mm}\\
Claim $2$. \textit{$V_{1}$ is an independent set.}\\
\textit{Proof of Claim $2$.} Suppose that two vertices $u,v\in V_{1}$ are
adjacent. By definition, $u,v$ have neighbors $u',v'\in V_{2}$, respectively. Then, the assignment $g(u')=g(v')=3$, $g(u)=g(v)=0$ and $g(x)=f(x)$ for any other vertex $x$ gives a $\gamma_{rdR}(G)$-function, which contradicts our earlier
assumption. $(\square)$\vspace{1mm}\\
Claim $3$. \textit{If some vertex $u$ in $V_{2}$ has a neighbor in $V_{0}$, then $N(u)\subseteq V_{0}$.}\\
\textit{Proof of Claim $3$.} Suppose that there exists a vertex $u$ in $V_{2}$ such that $N_{G}(u)\cap V_{0}\neq \emptyset$ and $N(u)\not\subseteq V_{0}$. Since each vertex in $V_{0}$ has at least two neighbors in $V_{2}$, it follows that $(V_{1}\cup V_{2})\setminus \{u\}$ is an RD set of $G$, which contradicts the fact that $|V_{1}|+|V_{2}|=\gamma_{r}(G)$. $(\square)$\vspace{1mm}

In the sequel, we let $V_{2}^{0}$ denote the set of vertices of $V_{2}$ having a neighbor in $V_{0}$ and let $V_{2}^{1}=V_{2}\setminus V_{2}^{0}$. Let $H$ and $K$ be the subgraphs of $G$ induced by $V_{1}\cup V_{2}^{1}$ and $V_{0}\cup V_{2}^{0}$, respectively. Trivially, $V(G)=V(H)\cup V(K)$ and by Claims $1$ and $3$, there are no edges between $V(H)$ and $V(K)$. It follows from the connectedness of $G$ that either $V_{0}\cup V_{2}^{0}=\emptyset$ or $V_{1}\cup V_{2}^{1}=\emptyset$. We now consider these two cases.\vspace{0.5mm}

\textit{Case 1.} $V_{0}\cup V_{2}^{0}\neq \emptyset$. Thus, $V_{1}\cup V_{2}^{1}=\emptyset$ and evidently we have $V_{0}\neq
\emptyset$. Since any vertex in $V_{0}$ must be adjacent to at least two vertices in $V_{2}$, we have $|V_{2}|\geq 2$ as well as $V_{2}$ is an R$2$D set of $G$. Thus, $\gamma_{r2}(G)\leq|V_{2}|=\gamma_{r}(G)=\gamma(G)$. On the other hand, since each R$2$D set of $G$ is an RD set of $G$, we get $\gamma_{r2}(G)\geq \gamma_{r}(G)=\gamma(G)$. Hence, $\gamma_{r2}(G)=\gamma_{r}(G)=\gamma (G)$.\vspace{0.5mm}

\textit{Case 2.} $V_{1}\cup V_{2}^{1}\neq \emptyset$. Thus $V_{0}\cup V_{2}^{0}=\emptyset$, and so $\gamma_{r}(G)=|V_{1}|+|V_{2}|=|V(G)|$. If there are two adjacent vertices $u$ and $v$ each of degree at least two, then $V(G)\setminus \{u,v\}$ is an RD set of $G$, a contradiction with $\gamma_{r}(G)=|V(G)|$. Thus, at least one of the end-points of any edge is leaf and this implies that $G$ is a star. This finishes the proof.
\end{proof}

We next explicitly characterize the connected regular claw-free graphs $G$ with $\gamma_{rdR}(G)=\gamma_{r}(G)+\gamma(G)$. To this aim, we need to recall some more definitions and a result due to Hansberg et al. \cite{hrv}. For an even integer $n\geq 4$, let $H_{n}$ be the $(n-2)$-regular graph of order $n$, or equivalently, $H_{n}$ is obtained from the complete graph $K_{n}$ by removing a perfect matching. It was shown in \cite{hrv} that the only connected regular claw-free graphs $G$ satisfying $\gamma_{2}(G)=\gamma(G)$ are the graph $H_{n}$ and the Cartesian product of two complete graphs of the same order. It is worth noting that the characterization of graphs $G$ for which $\gamma(G)=\gamma_{2}(G)$ remains open and it is only known in some particular classes of graphs.

\begin{proposition}
Let $G$ be a connected regular claw-free graph of order $n$. Then, $\gamma_{rdR}(G)=\gamma(G)+\gamma_{r}(G)$ if and only if
either $G\in \{P_{1},P_{2},P_{3}\}$, $G=H_{n}$ with $n\geq 6$, or $G=K_{p}\square K_{p}$ with $p\geq 3$ and $p^{2}=n$.
\end{proposition}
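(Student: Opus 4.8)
The plan is to read off the characterization from Theorem \ref{frame} together with the cited theorem of Hansberg et al. \cite{hrv}. By Theorem \ref{frame}, the equality $\gamma_{rdR}(G)=\gamma(G)+\gamma_r(G)$ holds for a connected graph precisely when $G$ is a star or $\gamma_{r2}(G)=\gamma_r(G)=\gamma(G)$, and I would split the argument along these two alternatives. In the star case the claw-freeness hypothesis does all the work: since $K_{1,3}$ is itself a claw, the only claw-free stars are $K_{1,0}=P_1$, $K_{1,1}=P_2$ and $K_{1,2}=P_3$, which produces exactly the three small graphs in the statement. A one-line computation confirms $\gamma_{rdR}=2,3,4$ for $P_1,P_2,P_3$, matching $\gamma+\gamma_r=1+1,\,1+2,\,1+3$; regularity plays no role in this branch, which is how the non-regular graph $P_3$ enters the list.

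For the alternative $\gamma_{r2}(G)=\gamma_r(G)=\gamma(G)$ I would first record the elementary inequalities, valid for every graph,
\[
\gamma(G)\le\gamma_2(G)\le\gamma_{r2}(G),\qquad \gamma(G)\le\gamma_r(G)\le\gamma_{r2}(G),
\]
which hold because a (restrained) $2$-dominating set is a (restrained) dominating set and the restrained requirement only adds constraints. These force
\[
\gamma_{r2}(G)=\gamma_r(G)=\gamma(G)\ \Longleftrightarrow\ \gamma_{r2}(G)=\gamma(G)\ \Longrightarrow\ \gamma_2(G)=\gamma(G),
\]
so under the hypothesis we obtain $\gamma_2(G)=\gamma(G)$, and the theorem of Hansberg et al. \cite{hrv} then confines $G$ to the family $\{H_n\}\cup\{K_p\square K_p\}$.

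It remains to determine the exact size ranges and to prove the converse, and here I would exhibit optimal restrained $2$-dominating sets explicitly. In $H_n$ (with $n$ even) any non-adjacent matched pair $\{u,u'\}$ is a $2$-dominating set of size $\gamma(H_n)=2$ whose complement induces $H_{n-2}$, of minimum degree $n-4$; this is positive exactly when $n\ge6$, so $\{u,u'\}$ is an R$2$D set and $\gamma_{r2}(H_n)=\gamma(H_n)$ precisely for $n\ge6$. In $K_p\square K_p$ any transversal (one cell in each row and each column) is a $2$-dominating set of size $\gamma(K_p\square K_p)=p$, and each off-transversal cell still has $p-2$ further off-transversal neighbors in its own row, so the complement has no isolated vertex exactly when $p\ge3$; hence $\gamma_{r2}(K_p\square K_p)=\gamma(K_p\square K_p)$ precisely for $p\ge3$. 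Combined with the squeeze above this yields $\gamma_{r2}=\gamma_r=\gamma$, hence the desired equality, for $H_n$ $(n\ge6)$ and $K_p\square K_p$ $(p\ge3)$, giving the ``if'' direction; in the ``only if'' direction the borderline graph $H_4=C_4=K_2\square K_2$, for which $\gamma_{r2}=4>2=\gamma$, simply fails the assumed equality and is thereby excluded, leaving exactly $n\ge6$ and $p\ge3$.

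The main obstacle is the bridge between Hansberg's criterion and the restrained condition: $\gamma_2=\gamma$ and $\gamma_{r2}=\gamma$ are \emph{not} equivalent in general (the cycle $C_4$ has $\gamma_2=\gamma=2$ but $\gamma_{r2}=4$), so \cite{hrv} can only be used to generate the candidate list, after which the restrained condition must be verified by hand on each family. The delicate part is checking that a minimum $2$-dominating set can be chosen to leave no isolated vertex in its complement exactly at the thresholds $n\ge6$ and $p\ge3$, and handling cleanly the single overlap $C_4=H_4=K_2\square K_2$ together with the degenerate small graphs $P_1,P_2,P_3$ coming from the star branch.
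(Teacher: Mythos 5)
Your proposal is correct and follows essentially the same route as the paper: both directions hinge on Theorem \ref{frame} combined with the squeeze $\gamma(G)\le\gamma_{2}(G)\le\gamma_{r2}(G)$ to invoke the Hansberg--Randerath--Volkmann characterization of $\gamma_{2}=\gamma$, the star branch is resolved by claw-freeness, and $H_{4}=C_{4}=K_{2}\square K_{2}$ is excluded as the lone borderline case. The only (cosmetic) difference is in the converse, where you certify $\gamma_{r2}=\gamma_{r}=\gamma$ via explicit R$2$D sets and then cite the sufficiency clause of Theorem \ref{frame}, whereas the paper assigns $2$ to those same vertices to exhibit an RDRD function directly --- these are the same construction.
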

\begin{proof}
Let $G$ be a connected regular claw-free graph of order $n\geq2$ for which $\gamma_{rdR}(G)=\gamma(G)+\gamma_{r}(G)$. By Theorem \ref{frame}, $G$ is a star or $\gamma_{r2}(G)=\gamma_{r}(G)=\gamma(G)$. If $\gamma_{r2}(G)=\gamma_{r}(G)=\gamma(G)$, then $\gamma(G)=\gamma_{2}(G)$ as $\gamma(G)\leq \gamma_{2}(G)\leq \gamma_{r2}(G)=\gamma(G)$. It follows that $G=H_{n}$ or $G=K_{p}\square K_{p}$ with $p\geq 2$ and $p^{2}=n$. Note that $H_{4}=K_{2}\square K_{2}=C_{4}$ will be excluded because $\gamma_{rdR}(C_{4})=6\neq \gamma(C_{4})=\gamma_{r}(C_{4})$. Therefore, $n\geq6$ and $p\geq3$. If $G$ is a star, we necessarily have $G\in \{P_{2},P_{3}\}$ as $G$ is a claw-free graph.

Conversely, assume that $G\in \{P_{1},P_{2},P_{3}\}$, $G=H_{n}$ with $n\geq 6$ or $G=K_{p}\square K_{p}$ with $p\geq 3$ and $p^{2}=n$. Definitely, if $G\in \{P_{1},P_{2},P_{3}\}$, then $\gamma_{rdR}(G)=\gamma(G)+\gamma_{r}(G)$. Let $G=H_{n}$ with $n\geq 6$. Then, $\gamma(H_{n})=\gamma_{r}(H_{n})=2$ (for instance, any two non-adjacent veetices of $H_{n}$ form both a
minimum dominating set and a minimum RD set of $H_{n}$). In addition, assigning $2$ to any two non-adjacent vertices of $H_{n}$ and $0$ to the other vertices provides an RDRD function of $G$. Therefore, $4=\gamma(G)+\gamma_{r}(G)\leq \gamma _{rdR}(H_{n})\leq 4$, and the equality follows. Finally, let $G=K_{p}\square K_{p}$ with $p\geq 3$ and $p^{2}=n$. Clearly, $\gamma(G)=\gamma_{r}(G)=p$. Also, if $V(K_{p})=\{x_{1},...,x_{p}\}$, then assigning $2$ to each of the $p$ vertices of $G$ of the type $(x_{i},x_{i})$ and $0$ to the remaining vertices gives an RDRD function of $G$. Hence, $2p=\gamma(G)+\gamma_{r}(G)\leq \gamma_{rdR}(G)\leq 2p$, and the equality follows.
\end{proof}

We now turn our attention to the family of all trees $T$ for which $\gamma_{rdR}(T)=\gamma_{r}(T)+\gamma (T)$. For this purpose, we need the result of Dankelmann et al. \cite{dhhs} concerning the characterization of trees with equal domination and restrained domination numbers. They defined a {\em labeling} of a tree $T$ as a mapping $S:V(T)\rightarrow \{X,Y\}$. The label of
a vertex $v$ is also called its {\em status}, denoted $sta(v)$. The sets of vertices of $X$ and $Y$ are denoted $S_{X}(T)$ and $S_{Y}(T)$, respectively. A labeled $K_{1}$ means a $K_{1}$ whose vertex is labeled with the status $Y$. Let $\mathcal{T}$ be the family of trees that can be labeled so that the resulting family of labeled trees contain a labeled $K_{1}$ and is closed under the operations $\mathcal{O}_{1}$ and $\mathcal{O}_{2}$ listed below.\vspace{1mm}\\
\textit{Operation $\mathcal{O}_1$:} Attach to a vertex $v$ of status $X$ a path $vxy$ where $sta(x)=X$ and $sta(y)=Y$.\vspace{1mm}\\
\textit{Operation $\mathcal{O}_2$:} Attach to a vertex $v$ of status $Y$ a path $vxyz$ where $sta(x)=sta(y)= X$ and $sta(z)=Y$.\vspace{1mm}

It has been shown in \cite{dhhs} that\\
$(i)$ every $v\in S_X$ is adjacent to at least one vertex in $S_X$ and to precisely one vertex in $S_Y$,\\
$(ii)$ $S_Y$ is the unique $\gamma_r(T)$-set, and\\
$(iii)$ for any tree $T$, $\gamma(T)=\gamma_r(T )$ if and only if $T\in \mathcal{T}$.

\begin{proposition}\label{trees}
Let $T$ be a tree of order $n$. Then, $\gamma_{rdR}(T)=\gamma (T)+\gamma_{r}(T)$ if and only is $T$ is a star.
\end{proposition}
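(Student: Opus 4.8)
The plan is to reduce everything to the characterization already obtained in Theorem \ref{frame} and then to exploit the structural description of the family $\mathcal{T}$ due to Dankelmann et al. \cite{dhhs}. By Theorem \ref{frame}, for the connected graph $T$ we have $\gamma_{rdR}(T)=\gamma(T)+\gamma_{r}(T)$ if and only if $T$ is a star or $\gamma_{r2}(T)=\gamma_{r}(T)=\gamma(T)$. The ``if'' direction of the proposition is then immediate, since a star satisfies the equality through the star clause of Theorem \ref{frame}. For the converse it suffices to prove that a tree $T$ with $\gamma_{r2}(T)=\gamma_{r}(T)=\gamma(T)$ is forced to be a star, and in fact the trivial star $K_{1}$.

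So I would assume $\gamma_{r2}(T)=\gamma_{r}(T)=\gamma(T)$. First I would use the equality $\gamma_{r}(T)=\gamma(T)$ together with fact $(iii)$ to conclude $T\in\mathcal{T}$, which endows $T$ with the canonical labeling partition $(S_{X},S_{Y})$ enjoying properties $(i)$ and $(ii)$. Next I would take a minimum R$2$D set $S$, so that $|S|=\gamma_{r2}(T)=\gamma_{r}(T)$. Since every R$2$D set is in particular a restrained dominating set (2-domination implies domination, and the restrained condition is literally the same), $S$ is a $\gamma_{r}(T)$-set; by the uniqueness statement $(ii)$, this gives $S=S_{Y}$.

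The crux is then a clean contradiction. Because $S_{Y}=S$ is $2$-dominating, every vertex of $S_{X}=V(T)\setminus S_{Y}$ has at least two neighbors in $S_{Y}$, whereas property $(i)$ asserts that each vertex of $S_{X}$ is adjacent to \emph{precisely one} vertex of $S_{Y}$. This is impossible unless $S_{X}=\emptyset$. Since the construction of $\mathcal{T}$ starts from a labeled $K_{1}$ (a single vertex of status $Y$) and both operations $\mathcal{O}_{1}$ and $\mathcal{O}_{2}$ introduce at least one new vertex of status $X$, the condition $S_{X}=\emptyset$ leaves only $T=K_{1}$, which is a star. This settles the converse and hence the characterization.

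I expect the main obstacle to lie in the careful bookkeeping with the cited facts rather than in any genuinely hard estimate: one must verify that a minimum R$2$D set really is a minimum restrained dominating set so that the uniqueness in $(ii)$ can be invoked, and one must confirm that $S_{X}=\emptyset$ genuinely pins down $T=K_{1}$ given the way $\mathcal{T}$ is generated. It is also worth recording, as a sanity check, that a nontrivial star $K_{1,m}$ with $m\geq2$ has $\gamma(K_{1,m})=1\neq m+1=\gamma_{r}(K_{1,m})$, so such stars satisfy the equality only through the star clause and not through the second clause, which is exactly why the second clause collapses to $K_{1}$ for trees.
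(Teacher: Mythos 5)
Your proof is correct and follows essentially the same route as the paper: both reduce to Theorem \ref{frame} together with the Dankelmann--Hattingh--Henning--Swart characterization of trees with $\gamma=\gamma_{r}$, and both derive the contradiction from property $(i)$ (each vertex of $S_{X}$ has \emph{precisely one} neighbor in $S_{Y}$, while two would be needed). The only cosmetic difference is that you invoke the clause $\gamma_{r2}(T)=\gamma_{r}(T)=\gamma(T)$ of Theorem \ref{frame} as a black box and work with a minimum R$2$D set, whereas the paper re-runs part of that theorem's proof to show $V_{1}=V_{3}=\emptyset$ and $V_{2}=S_{Y}(T)$ for a $\gamma_{rdR}(T)$-function directly.
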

\begin{proof}
Let $T$ be a tree with $\gamma_{rdR}(T)=\gamma(T)+\gamma_{r}(T)$. If $T$ is a star, then we are done. Suppose that $T$ is
not a star and let $f=(V_{0},V_{1},V_{2},V_{3})$ be a $\gamma_{rdR}(T)$-function. Following the same argument as used to prove the lower bound of Theorem \ref{frame} along with Theorem \ref{frame} itself, we can easily see that $V_{3}=V_{1}=\emptyset$ and $V_{2}$ is a $\gamma_{r}(T)$-set of $T$. Hence, $V_{2}=S_{Y}(T)$. In such a situation, $f$ cannot be an RDRD function of $T$ because each vertex labeled $X$ has precisely one neighbor in $V_{2}$. This contradiction shows that $T$ is a star.
\end{proof}

As an immediate result from Proposition \ref{trees}, we have the following corollary.

\begin{corollary}\emph{(\cite{mmv})}
For any tree $T$, $\gamma_{rdR}(T)=\gamma_{r}(T)+1$ if and only if $T$ is a star.
\end{corollary}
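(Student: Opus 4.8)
The plan is to derive this corollary directly from Proposition~\ref{trees} by relating the quantity $\gamma_r(T)+1$ to $\gamma(T)+\gamma_r(T)$. First I would invoke the standard fact, valid for any tree $T$, that $\gamma(T)\geq 1$, with equality precisely when $T$ has a universal vertex, i.e.\ when $T$ is a star (including the trivial cases $K_1$ and $K_2$, which are stars). Thus the identity $\gamma_{rdR}(T)=\gamma_r(T)+1$ can be rewritten as $\gamma_{rdR}(T)=\gamma_r(T)+\gamma(T)$ together with the condition $\gamma(T)=1$.

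The forward direction is then immediate: if $\gamma_{rdR}(T)=\gamma_r(T)+1$, then by Theorem~\ref{frame} we have $\gamma_r(T)+1=\gamma_{rdR}(T)\geq \gamma(T)+\gamma_r(T)$, which forces $\gamma(T)\leq 1$, hence $\gamma(T)=1$ and $T$ is a star. (Alternatively, once we know $\gamma(T)=1$, the equality $\gamma_{rdR}(T)=\gamma_r(T)+\gamma(T)$ holds, and Proposition~\ref{trees} concludes that $T$ is a star.) I would phrase this so that Theorem~\ref{frame} supplies the inequality $\gamma_{rdR}(T)\geq\gamma(T)+\gamma_r(T)$ and the hypothesis pins down $\gamma(T)=1$.

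For the converse, I would simply verify that if $T$ is a star $K_{1,n-1}$, then $\gamma_{rdR}(T)=\gamma_r(T)+1$. Since a star has a universal vertex, $\gamma(K_{1,n-1})=1$ and $\gamma_r(K_{1,n-1})$ is easily computed (the restrained domination number of a star equals $1$ for $K_1,K_2$ and $n-1$ for larger stars, but in any case the value of $\gamma_{rdR}$ on a star can be read off directly); by Proposition~\ref{trees}, $\gamma_{rdR}(T)=\gamma(T)+\gamma_r(T)=1+\gamma_r(T)$, giving exactly $\gamma_{rdR}(T)=\gamma_r(T)+1$.

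The main (and really the only) obstacle is bookkeeping rather than substance: I must make sure that the characterization of equality in Theorem~\ref{frame} and in Proposition~\ref{trees}, namely ``$T$ is a star,'' is consistent across the trivial small cases and that $\gamma(T)=1\iff T$ is a star is invoked cleanly. Since the genuine content is already carried by Proposition~\ref{trees}, the proof reduces to observing that the displayed corollary is the special case obtained by setting $\gamma(T)=1$, and everything else is an elementary substitution.
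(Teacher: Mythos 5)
Your proof is correct and matches the paper's intended derivation: the corollary is stated there without proof as an immediate consequence of Proposition~\ref{trees}, precisely via the observation that a tree satisfies $\gamma(T)=1$ exactly when it is a star, with Theorem~\ref{frame} supplying the inequality $\gamma_{rdR}(T)\geq\gamma(T)+\gamma_r(T)$ for the forward direction. One incidental slip: your parenthetical values for the restrained domination number of stars are wrong (in fact $\gamma_r(K_{1,n-1})=n$ for every $n\geq 2$, e.g.\ $\gamma_r(K_2)=2$), but since you explicitly do not rely on them and instead route the converse through Proposition~\ref{trees}, this does not affect the argument.
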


We deduce from Proposition \ref{trees} that for every tree $T$ of diameter at least three satisfies $\gamma _{dR}^{r}(T)\geq \gamma(T)+\gamma _{r}(T)+1$. By the way, characterizing trees $T $ with $\gamma_{rdR}(T)=\gamma(T)+\gamma _{r}(T)+1$ is an interesting problem.


\section{Trees}

We will see in this section that the star is the only nontrivial tree having the smallest RDRD number among all nontrivial trees of the same order. More formally, we prove the following result. We then characterize all extremal trees attaining the lower bound given in the following theorem.

\begin{theorem}\label{Tree2}
For any tree $T$ of order $n\geq2$ different from the star $K_{1,n-1}$, $\gamma_{rdR}(T)\geq n+2$. 
\end{theorem}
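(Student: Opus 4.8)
The plan is to take a $\gamma_{rdR}(T)$-function $f=(V_0,V_1,V_2,V_3)$ and argue that its weight cannot be as small as $n+1$ unless $T$ is the star. The natural strategy is to compare the weight $\omega(f)=|V_1|+2|V_2|+3|V_3|$ against $n=|V_0|+|V_1|+|V_2|+|V_3|$ and track the deficit. Writing $\omega(f)-n=-|V_0|+|V_2|+2|V_3|$, I would aim to show that $|V_2|+2|V_3|\ge|V_0|+2$ whenever $T$ is not a star, which is exactly the claim. So the heart of the matter is a counting argument bounding $|V_0|$ from above in terms of $|V_2|$ and $|V_3|$.

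First I would invoke Observation~\ref{leaf}: choosing $f$ so that $|V_0|$ is maximum, we get that $V_1$ is independent and no vertex of $V_1$ has a neighbor in $V_0$. This lets me treat the $V_0$-vertices as being dominated only through $V_2\cup V_3$. Since $T$ is a tree (hence has no cycles), the edges from $V_3\cup V_2$ into $V_0$ form a forest, and each vertex of $V_0$ needs either one $V_3$-neighbor or two $V_2$-neighbors. Moreover $G[V_0]$ has no isolated vertex, so $V_0$ contributes its own internal edges. The key leverage is acyclicity: in a tree, if I count edges incident to $V_0$, the total number of such edges is at most $n-1$, and every $V_0$-vertex contributes at least one internal edge (restrained condition) plus its domination requirement. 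I expect a clean inequality of the form (edges needed) $\le$ (edges available in a tree) to force $|V_2|+2|V_3|$ to exceed $|V_0|$ by the required margin, with the slack of $2$ coming from the tree having one fewer edge than a vertex count and from the fact that $V_0$-components are not single vertices.

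The cleaner route, which I would try first, is induction on $n$ (or on the structure via a rooted longest-path argument). Root $T$ and take a leaf $u$ farthest from the root with support vertex $s$; by Observation~\ref{leaf}(i), $f(u)\ge1$. Removing a small subtree hanging at $s$ and applying the inductive hypothesis to the smaller tree $T'$, I would show that the removed part contributes at least as much weight as the number of vertices removed plus what is needed to preserve the bound, handling separately the cases according to $f(s)$ and whether $s$ has other children. The delicate bookkeeping is ensuring the restrained condition survives the surgery: deleting vertices may create a new isolated vertex inside $V_0$ of $T'$, so the reduction must be chosen to avoid this, possibly by removing an entire pendant path or an entire branch at $s$ rather than a single vertex.

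The main obstacle I anticipate is precisely this interaction between the restrained condition and the reduction step: unlike ordinary double Roman domination, I cannot freely prune leaves because a leaf in $V_0$ must be matched to another $V_0$-vertex, so removing its neighbor could violate the constraint on the residual tree, and conversely an optimal function on $T'$ may not extend to one on $T$ without extra cost. To control this I would base the whole argument on a function with $|V_0|$ maximum (so Observation~\ref{leaf}(ii) applies) and carefully identify, for the farthest leaf, a subtree whose deletion leaves an RDRD function on $T'$ of weight exactly $\omega(f)$ minus the weight on the removed vertices; the inequality $\gamma_{rdR}(T')\ge (n-|{\rm removed}|)+2$ then closes the induction once I verify the removed block carries weight at least its vertex count. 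The base cases ($n=2$, and small trees) together with the star being the sole exception are checked directly.
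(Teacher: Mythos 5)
Your second route (induction via a deepest leaf on a longest path, deleting a small block and comparing the weight carried by that block with the number of vertices removed) is exactly the strategy the paper uses, and you have correctly located the two danger points: the restrained condition surviving the surgery, and the residual tree degenerating. But as written the proposal is a plan rather than a proof, and the parts you defer are precisely where the work lies. The most serious gap is the star degeneracy: you dismiss it as a base-case matter, but at \emph{every} inductive step the residual tree $T'$ (or $T''=T-v-u$) may be a star, for which the induction hypothesis only yields $\gamma_{rdR}(T')\geq n'+1$, one short of what your bookkeeping needs. The paper handles this by explicitly describing all trees $T$ whose deletion block leaves a star and verifying $\gamma_{rdR}(T)=n+2$ directly in each such configuration; without that, the induction does not close. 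A second gap is your claim that ``the removed block carries weight at least its vertex count'': this is false for the raw optimal function in several configurations and must be engineered. For instance, when the deepest leaf $v$ has $f(v)=2$ and its support $u$ has $f(u)=1$, one must first swap to $(g(v),g(u))=(1,2)$ before deleting $v$; and when $f(v)=3$, $f(u)=f(w)=0$ and $w$ has no other neighbor in $V_0$, deleting $\{u,v\}$ breaks the restrained condition at $w$, and one must pay by resetting $h(w)=1$, spending part of the slack $3-2=1$ that the block provides. These are not routine checks but the substance of the argument.

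Your first route (showing $|V_2|+2|V_3|\geq|V_0|+2$ by counting edges in the forest induced on $V_0\cup V_2\cup V_3$) does not deliver the bound as sketched. The edge count gives roughly $|V_2|+|V_3|\geq \tfrac{1}{2}|V_0|+|A'|+1$ where $A'$ is the set of $V_0$-vertices with two $V_2$-neighbors, and when most of $V_0$ is dominated by few $V_3$-vertices this falls short of $|V_2|+2|V_3|\geq|V_0|+2$ without a further structural argument propagating weight along the $V_0$-components (a vertex of $V_0$ matched to a $V_0$-partner adjacent to a $V_3$-vertex still needs its own domination, which forces additional positive labels deeper in the tree). Also note that Observation~\ref{leaf}(ii) concerns a $\gamma_{rdR}$-function with $|V_0|$ maximum among optimal functions; it tells you nothing about $V_2$ or $V_3$, so it does not by itself let you ``treat the $V_0$-vertices as dominated only through $V_2\cup V_3$'' in a way that strengthens the count. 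In short: right skeleton, but the case analysis that constitutes the proof is missing.
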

\begin{proof}
We proceed by induction on the order $n$ of $T$. The result trivially holds for $n=2$. Suppose that $\gamma_{rdR}(T')\geq n'+2$ for any tree $T'\neq K_{1,n'-1}$ of order $2\leq n'<n$. Let $T\neq K_{1,n-1}$ be a tree of order $n$. In particular, this implies that $\mbox{diam}(T)\geq3$.

Let $f$ be a $\gamma_{rdR}(T)$-function. Suppose first that $T$ has a strong support vertex $u$. Let $L_{u}$ be the set of leaves adjacent to $u$. Clearly, $f(x)\geq1$ for all $x\in L_{u}$. Suppose that $f(x)=1$ for some $x\in L_{u}$. We set $T'=T-x$. Notice that $T'\neq K_{1,n-2}$, otherwise $T=K_{1,n-1}$ or $T$ is obtained from a star by joining the vertex $x$ to one of its leaves (which is necessarily the vertex $u$). The first statement is a contradiction, and the second statement is impossible because $u$ is a strong support vertex. Therefore, $\gamma_{rdR}(T')\geq n+1$ by the induction hypothesis. On the other hand, it is clear that $f'=f\mid_{V(T')}$ is an RDRD function of $T'$. We than have
$$\gamma_{rdR}(T)-1=\omega(f')\geq \gamma_{rdR}(T')\geq(n-1)+2,$$
and so $\gamma_{rdR}(T)\geq n+2$.

Suppose now that $f(x)\geq2$ for each $x\in L_{u}$. Since $f$ is a $\gamma_{rdR}(T)$-function, it follows that $f(x)=2$ for all $x\in L_{u}$. Let $x$ and $x'$ be two distinct leaves adjacent to $u$. Then, $f'(x')=3$ and $f'(y)=f(y)$ for all $y\in V(T)\setminus\{x,x'\}$ is an RDRD function of $T'=T-x$ of weight $\omega(f)-1$. Therefore,
$$\gamma_{rdR}(T)-1=\omega(f')\geq \gamma_{rdR}(T')\geq(n-1)+2.$$
This results in the desired lower bound.

From now on, we may assume that $T$ has no strong support vertices. Let $r$ and $v$ be two leaves with $d_{T}(r,v)=\mbox{diam}(T)$. We root the tree $T$ at $r$. Let $u$ be the parent of $v$, and $w$ be the parent of $u$. Suppose that $T'=T-v=K_{1,n-2}$. Since $T$ is different from a star, it follows that $T$ is obtained from $T'$ by joining $v$ to a leaf of $T'$. It is then easy to see that $\gamma_{rdR}(T)=n+2$. So, we assume that $T'\neq K_{1,n-2}$. If $f(v)=1$, then we get the lower bound similar to the argument given in the second paragraph of the proof. So, let $f(v)\geq2$. We now distinguish two cases depending on $f(v)$.\vspace{0.5mm}

\textit{Case 1.} $f(v)=2$. Because $f$ is a $\gamma_{rdR}(T)$-function, it follows that $f(u)\leq1$. Note that if $f(u)=0$, then $f(w)=0$ since $T[V^{f}_{0}]$ has no isolated vertices. Hence, $f\big{(}N_{T}(u)\big{)}<3$, a contradiction. Therefore, $f(u)=1$. We observe that the assignment $\big{(}g(v),g(u)\big{)}=(1,2)$ and $g(y)=f(v)$ for any other vertex $y$ defines a $\gamma_{rdR}(T)$-function such that $g(v)=1$. Since $T'\neq K_{1,n-2}$, we have again the lower bound similarly to the second paragraph of the proof.\vspace{0.5mm}

\textit{Case 2.} $f(v)=3$. Since $f$ is a $\gamma_{rdR}(T)$-function, we have $f(u)=0$. This in particular implies that $f(w)=0$. We consider two more possibilities depending on the behavior of $w$.\vspace{0.25mm}

\textit{Subcase 2.1.} $w$ is adjacent to a vertex labeled $0$ under $f$ different from $u$. Let $T''=T-v-u$. Since $\mbox{diam}(T)\geq3$, it follows that $|V(T'')|=n-2\geq2$. Suppose now that $T''=K_{1,n-3}$. It is not difficult to see that $T$ is obtained from $T''$ by adding the two vertices $u$ and $v$, and the edges $uv$ and $uw$ such that either\vspace{0.25mm}

$(i)$ $w$ is a leaf of $T''$, or

$(ii)$ $w$ is the central vertex of $T''$.\vspace{0.25mm}\\
In both possible cases, we observe that $\gamma_{rdR}(T)=n+2$. Therefore, we assume that $T''\neq K_{1,n-3}$. It is easily observed that $f''=f\mid_{V(T'')}$ is an RDRD function of $T''$. We then have
$$\gamma_{rdR}(T)-3=\omega(f'')\geq \gamma_{rdR}(T'')\geq(n-2)+2,$$
and so $\gamma_{rdR}(T)>n+2$.\vspace{0.25mm}

\textit{Subcase 2.2.} $w$ has no neighbor labeled $0$ under $f$ different from $u$. In such a situation, the assignment $h(w)=1$ and $h(y)=f(y)$ for the other vertices $y\in V(T)\setminus\{u,v,w\}$ gives us an RDRD function of $T''$. Thus,
$$\gamma_{rdR}(T)-2=\omega(h)\geq \gamma_{rdR}(T'')\geq(n-2)+2.$$
Therefore, $\gamma_{rdR}(T)\geq n+2$.\vspace{0.25mm}

All in all, we have proved the desired lower bound for all non-star trees of order at least two.
\end{proof}

As an immediate consequence of Theorem \ref{Tree2}, we have the following result.

\begin{corollary}\label{cor-tree}
For any tree $T$ of order $n\geq2$, $\gamma_{rdR}(T)\geq n+1$ with equality if and only if $T$ is isomorphic to a star.
\end{corollary}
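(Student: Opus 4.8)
The plan is to derive the result directly from Theorem~\ref{Tree2} together with the known value of $\gamma_{rdR}$ for stars. The statement to prove is that every tree $T$ of order $n\geq 2$ satisfies $\gamma_{rdR}(T)\geq n+1$, with equality precisely when $T$ is a star. Since Theorem~\ref{Tree2} already handles all non-star trees by giving the strictly stronger bound $\gamma_{rdR}(T)\geq n+2$, the corollary reduces to two short tasks: establishing the inequality $\gamma_{rdR}(T)\geq n+1$ in the remaining case (the star), and verifying the equality characterization.

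First I would split into cases according to whether $T$ is a star. If $T\neq K_{1,n-1}$, then Theorem~\ref{Tree2} immediately yields $\gamma_{rdR}(T)\geq n+2 > n+1$, so the inequality holds and equality cannot occur. If $T=K_{1,n-1}$ is a star, I would compute $\gamma_{rdR}(K_{1,n-1})$ explicitly. By Observation~\ref{leaf}$(i)$, every leaf receives a positive value, and an optimal RDRD function assigns $3$ to the center and $1$ to each of the $n-1$ leaves: the center in $V_3$ dominates every leaf, and since the leaves lie in $V_1$ (not $V_0$) the restrained condition on $G[V_0^f]$ is vacuous. This gives weight $3+(n-1)=n+2$. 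One must check this is optimal; indeed, the center cannot be $0$ (it would leave the leaves with no neighbor of positive value forcing each leaf to cost at least $2$, giving weight at least $2(n-1)\geq n+1$ for $n\geq 2$, and for $n\geq 3$ this exceeds $n+2$), and the configuration above is easily seen to be minimum. Thus $\gamma_{rdR}(K_{1,n-1})=n+2$ for $n\geq 2$, which is strictly larger than $n+1$.

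Here I notice a subtlety that is in fact the only real point to get right: the stated equality $\gamma_{rdR}(T)=n+1$ for stars appears not to match the computation $\gamma_{rdR}(K_{1,n-1})=n+2$. The resolution is that the small cases where the ``star'' coincides with a short path must be treated carefully. For $n=2$, $K_{1,1}=P_2$, and a direct check gives $\gamma_{rdR}(P_2)=3=n+1$; for $n=1$ the trivial graph is excluded. So the equality $\gamma_{rdR}=n+1$ is attained only by the degenerate star $K_{1,1}=K_2$, while for $n\geq 3$ the value is $n+2$. The hard part will be presenting the characterization so that it correctly captures which stars achieve $n+1$: one should verify whether the intended claim treats $P_2$ as the extremal case, or whether the bound should be read with the convention that equality holds for the star on two vertices. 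I would therefore write the equality direction by checking the boundary order $n=2$ explicitly and confirming that no non-star tree and no larger star meets $n+1$.

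In summary, the proof is a two-line deduction from Theorem~\ref{Tree2} for the inequality, combined with the explicit evaluation $\gamma_{rdR}(K_{1,n-1})=n+2$ and the boundary check at $n=2$ for the equality characterization. The only genuine care needed is in reconciling the value of $\gamma_{rdR}$ on stars with the claimed equality condition, which I would address by pinning down exactly which small stars (if any beyond $K_2$) saturate the bound $n+1$, and making the statement and its proof consistent on that point.
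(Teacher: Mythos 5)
Your overall strategy is the right one and is exactly what the paper intends: Theorem~\ref{Tree2} gives $\gamma_{rdR}(T)\geq n+2>n+1$ for every non-star tree, so the corollary reduces to evaluating $\gamma_{rdR}(K_{1,n-1})$. But your evaluation of the star is wrong, and this error propagates into a false conclusion. You claim an optimal RDRD function must assign $3$ to the center, giving weight $n+2$. In fact condition (b) of a DRD function only requires a vertex labeled $1$ to have a neighbor in $V_2\cup V_3$, so assigning $2$ to the center and $1$ to each of the $n-1$ leaves is a valid RDRD function (there are no vertices labeled $0$, so the restrained condition is vacuous) of weight $2+(n-1)=n+1$. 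For the matching lower bound: every leaf must receive a positive label (a leaf in $V_0$ would either be isolated in $G[V_0^f]$ or force its unique neighbor into $V_0$, leaving it undefended), hence the center cannot lie in $V_0$ either (it would be isolated there), so all $n$ labels are positive; they cannot all equal $1$, since a vertex labeled $1$ needs a neighbor labeled at least $2$. Thus $\gamma_{rdR}(K_{1,n-1})=n+1$ for every $n\geq 2$.

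Consequently the ``subtlety'' you spend the second half of the proposal wrestling with does not exist, and your final assertion --- that only $K_2$ attains $n+1$ while stars on $n\geq 3$ vertices have value $n+2$ --- is false and directly contradicts the statement you are proving. It also contradicts facts recorded elsewhere in the paper: $\gamma_{rdR}(P_3)=4=3+1$ (Theorem~\ref{four}) and $\gamma_{rdR}(K_{1,3})=5=4+1$ (Theorem~\ref{five}). Once the star value is computed correctly, the corollary follows immediately: non-star trees satisfy $\gamma_{rdR}(T)\geq n+2$, stars satisfy $\gamma_{rdR}(K_{1,n-1})=n+1$, so the bound $n+1$ holds for all trees of order $n\geq 2$ and equality occurs precisely for stars.
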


In what follows, we give the characterization of all trees for which the lower bound given in Theorem \ref{Tree2} holds with equality. For this purpose, we introduce the following families of trees.

\begin{itemize}
\item Let $\mathcal{T}_1$ be the family of trees $T$ obtained from a double star with support vertices $u$ and $v$ by subdividing the edge $uv$ at most twice.  

\item Let $T$ be a tree whose support vertices are adjacent to at most two leaves and the distance between any two leaves with different support vertices is $k\equiv0$ $(\mbox{mod}\ 3)$. We then set $\mathcal{T}_2$ to be the family of trees obtained from the trees $T$ by joining $r\geq0$ new vertices to the vertices at distance $k\equiv0$ $(\mbox{mod}\ 3)$ from any leaf $x$ (and the leaf at distance two from $x$ if any).
\end{itemize}

Obviously, any tree attaining the lower bound given in Theorem \ref{Tree2} is of order at least four.

\begin{theorem}\label{Tree1-1}
For any tree $T$ of order $n\geq4$, $\gamma_{rdR}(T)=n+2$ if and if only $T\in \mathcal{T}_1\cup \mathcal{T}_2$.
\end{theorem}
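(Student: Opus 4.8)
The plan is to prove the two implications separately, in each case leaning on Theorem~\ref{Tree2}, which already supplies the lower bound $\gamma_{rdR}(T)\ge n+2$ for every non-star tree. Consequently the ``if'' direction reduces to exhibiting, for each $T\in\mathcal{T}_1\cup\mathcal{T}_2$, a single RDRD function of weight exactly $n+2$; equality then follows from the lower bound. The ``only if'' direction is the substantial one, and I would attack it by induction on $n$, re-running the reduction scheme of the proof of Theorem~\ref{Tree2} and recording, at every step, the precise circumstances under which the inequalities there become equalities.

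For the sufficiency constructions I would proceed family by family. For $T\in\mathcal{T}_1$, that is, a double star with supports $u,v$ whose connecting edge is subdivided $0$, $1$ or $2$ times, assign $1$ to every leaf and $2$ to each of $u,v$; the interior is handled by giving the lone subdivision vertex value $1$ (one subdivision) or by using the block $(u,s_1,s_2,v)\mapsto(3,0,0,3)$ (two subdivisions). A direct check shows each is a valid RDRD function of weight $n+2$. For $T\in\mathcal{T}_2$ the governing idea is a \emph{period-three} scheme: along the paths joining leaves of distinct supports, which by hypothesis have length $\equiv 0\pmod 3$, place the repeating block $3,0,0$, so that the two zeros of each block are mutually adjacent (ensuring $T[V_0]$ has no isolated vertex) and each zero sees a vertex of value $3$ (ensuring the double Roman conditions); at the remaining phase-$0$ vertices one places a $2$ or a $3$ and everywhere else a $0$, chosen so that every $0$ is adjacent to a $0$ and is double-Roman-dominated. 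The key point that makes the pendant-attachment operation weight-neutral is that every vertex at distance $\equiv 0\pmod 3$ from a leaf receives a label $\ge 2$; hence each of the $r$ attached pendant vertices may be given value $1$, raising both the order and the weight by exactly one and thereby preserving the identity $\omega(f)=n+2$. A direct count on the base tree shows the constructed function already has weight $n+2$, and Theorem~\ref{Tree2} then forces equality.

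For the necessity direction I would assume $\gamma_{rdR}(T)=n+2$ with $n\ge 4$ and induct, taking a $\gamma_{rdR}(T)$-function $f$ and a diametral path exactly as in Theorem~\ref{Tree2}. If $T$ has a strong support vertex, the reduction $T'=T-x$ used there satisfies $\gamma_{rdR}(T)-1\ge\gamma_{rdR}(T')\ge(n-1)+2$; tightness throughout forces $\gamma_{rdR}(T')=(n-1)+2$, so by the induction hypothesis $T'\in\mathcal{T}_1\cup\mathcal{T}_2$ (or $T'$ is a star, treated directly), and I would then read off that re-attaching the deleted leaf keeps $T$ in the appropriate family. When $T$ has no strong support vertex I would follow the case split on $f(v)\in\{1,2,3\}$ and, in the value-$3$ case, on the behaviour of the grandparent $w$, precisely as in Subcases~2.1 and~2.2 of Theorem~\ref{Tree2}: in each branch tightness of the displayed inequality pins $\gamma_{rdR}$ of the reduced tree $T'$ or $T''$ at its extremal value, the induction hypothesis places it in $\mathcal{T}_1\cup\mathcal{T}_2$, and the way the deleted vertices hang off a phase-$0$ vertex identifies the resulting tree as one obtained either by a subdivision (feeding $\mathcal{T}_1$) or by a pendant attachment at a vertex of distance $\equiv 0\pmod 3$ from a leaf (feeding $\mathcal{T}_2$).

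The hard part will be the bookkeeping in this last direction. Each reduction may in principle leave $\mathcal{T}_1\cup\mathcal{T}_2$, so I must verify that tightness actually forbids this, and I must track the residue of the deleted vertices modulo $3$ to guarantee that the reconstructed tree meets both the congruence ``distance between leaves of different supports $\equiv 0\pmod 3$'' and the ``at most two leaves per support'' constraint defining $\mathcal{T}_2$. A delicate point is the interface between the two families: the small-diameter extremal trees (the subdivided double stars of $\mathcal{T}_1$, of diameter at most five) must be separated cleanly from the period-three trees of $\mathcal{T}_2$, and the base cases (orders $4,5,6,7$) should be checked by hand both to seed the families and to locate exactly where the handoff occurs. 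Throughout, the structural facts of Observation~\ref{leaf}---that no leaf is labelled $0$, and that for a $\gamma_{rdR}(T)$-function maximizing $|V_0|$ the set $V_1$ is independent with no edge to $V_0$---should be invoked to normalize $f$ before each reduction, keeping the number of genuinely distinct subcases manageable.
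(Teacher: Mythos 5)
Your sufficiency argument is essentially the paper's: exhibit an explicit weight-$(n+2)$ RDRD function for each member of $\mathcal{T}_1\cup\mathcal{T}_2$ (the paper uses the same period-three labelling, giving $2$ to leaves of strong supports, $3$ to the other leaves and to non-leaf vertices at distance $\equiv 0\pmod 3$ from a leaf, $0$ elsewhere on the base tree and $1$ to the attached pendants) and invoke Theorem~\ref{Tree2} for the matching lower bound. That half is fine.

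For necessity, however, you have chosen a genuinely different route from the paper, and as written it contains a real gap. The paper does \emph{not} induct: it fixes a $\gamma_{rdR}(T)$-function $f=(V_0,V_1,V_2,V_3)$ of weight $n+2$, disposes of the case $V_0=\emptyset$ directly, and then, for $V_0\neq\emptyset$, derives a list of structural facts about $f$ by deleting well-chosen edges (or the vertex set of a component $H$ of $T[V_0]$), applying Theorem~\ref{Tree2} and Corollary~\ref{cor-tree} to the resulting components, and summing: every component of $T[V_0]$ is a $K_2$, every vertex of $V_0$ sees exactly one vertex of $V_3$ or exactly two of $V_2$ (and not both kinds), every vertex of $V_2$ has exactly one neighbour in $V_0$, and $T[V_2\cup V_3]$ is edgeless. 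These facts together force $T\in\mathcal{T}_1\cup\mathcal{T}_2$. Your plan instead reduces $T$ to a smaller extremal tree $T'$ or $T''$, applies the induction hypothesis, and then must ``read off that re-attaching the deleted vertices keeps $T$ in the appropriate family.'' That last step is the entire content of the theorem and is not established: knowing $T'\in\mathcal{T}_1\cup\mathcal{T}_2$ tells you nothing about \emph{where} in $T'$ the deleted leaf (or path of length two) was attached unless you also control the structure of the minimum RDRD functions of trees in these families (their supports, their phase-$0$ vertices, and which attachment sites preserve extremality). You acknowledge this as ``bookkeeping,'' but it is not bookkeeping; it requires an auxiliary characterization of extendable positions that is at least as hard as the direct analysis, and without it the argument does not close. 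If you want to salvage the inductive route you would need, as an intermediate lemma, a description of all $\gamma_{rdR}$-functions (or at least of the sets $V_0,V_2,V_3$ of one canonical such function) on every tree of $\mathcal{T}_1\cup\mathcal{T}_2$; otherwise I would recommend switching to the paper's direct edge-deletion analysis of the extremal function.
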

\begin{proof}
Let $T$ be a tree of order $n\geq4$ for which $\gamma_{rdR}(T)=n+2$ and let $f=(V_{0},V_{1},V_{2},V_{3})$ be a $\gamma_{rdR}(T)$-function.

Let $V_0=\emptyset$. Since $|V_1|+|V_2|+|V_3|=n$ and $|V_1|+2|V_2|+3|V_3|=n+2$, it follows that $|V_2|+2|V_3|=2$. If $V_3\neq \emptyset$, it necessarily results in $(|V_2|,|V_3|)=(0,1)$. So, $T$ has a vertex $x$ labeled with $3$ under $f$ and the other vertices of $T$ are labeled $1$ under $f$ adjacent to $x$. This is impossible as $T\ncong K_{1,n-1}$. Therefore, $(|V_2|,|V_3|)=(2,0)$. Let $V_2=\{u,v\}$. If $uv\in E(T)$, it is easily seen that $T$ is isomorphic to a double star. Thus, $T\in \mathcal{T}_1$. So, let $uv\notin E(T)$. Let $P$ be the unique $u,v$-path in $T$. Since every vertex in $V_1$ must be adjacent to $u$ or $v$, this shows that the length $\ell(P)$ of $P$ is at most three. We recall that $T$ is different from a star and that $n\geq4$. It is not difficult to see that in both possibilities $\ell(P)=2$ and $\ell(P)=3$, $T$ belongs to $\mathcal{T}_1$. So, from now on, we may assume that $V_0\neq \emptyset$.\vspace{0.5mm}

Suppose to the contrary that there exists a vertex $w\in V_0$ adjacent to two vertices $u\in V_3$ and $v\in V_2\cup V_3$. By removal the edge $wv$, we obtain two components $T_1$ and $T_2$ with $\gamma_{rdR}(T_1)\geq|V(T_1)|+1$ and $\gamma_{rdR}(T_2)\geq|V(T_2)|+1$ by Corollary \ref{cor-tree}. Let $w\in V(T_1)$. Since $T[V_0]$ has no isolated vertices, $w$ has a neighbor $w'\in V_0$. On the other hand, it is easy to see that $f\mid_{V(T_1)}$ and $f\mid_{V(T_2)}$ are RDRD functions of $T_1$ and $T_2$, respectively. This shows that
\begin{equation*}
\begin{array}{lcl} 
n+2&=&|V(T_1)|+|V(T_2)|+2\leq \gamma_{rdR}(T_1)+\gamma_{rdR}(T_2)\\
&\leq&f\big{(}V(T_1)\big{)}+f\big{(}V(T_2)\big{)}=f\big{(}V(T)\big{)}=\gamma_{rdR}(T)=n+2.
\end{array}
\end{equation*}
This implies that $f\big{(}V(T_1)\big{)}=\gamma_{rdR}(T_1)=|V(T_1)|+1$ and $f\big{(}V(T_2)\big{)}=\gamma_{rdR}(T_2)=|V(T_2)|+1$. In particular, this shows that $T_1$ is isomorphic to a star on at least three vertices with central vertex $w$. This contradicts the fact that $w'\in V_0$ is a leaf of $T$. So, there is no vertex in $V_0$ having a neighbor in $V_3$ as well as a neighbor in $v\in V_2\cup V_3$. 
 
Suppose that $v\in V_0$ is adjacent to at least three vertices $x,y,z\in V_2$. Let $T_1$ and $T_2$ be the two components of $T-vx$ such that $x\in V(T_1)$. Note that $v$ is adjacent to a vertex $v'\in V_0$. If $T_2$ is a star, then $v'\in V_0$ is a leaf of $T$, which is impossible. Therefore, $T_2$ is different from a star. Thus, $\gamma_{rdR}(T)=f\big{(}V(T_1)\big{)}+f\big{(}V(T_2)\big{)}\geq \gamma_{rdR}(T_1)+\gamma_{rdR}(T_2)\geq|V(T_1)|+1+|V(T_1)|+2=n+3$. This is a contradiction.

The above discussion shows that every vertex in $V_0$ is adjacent to ``only one vertex in $V_3$" or ``precisely two vertices in $V_2$".\vspace{0.75mm}

Let $H$ be a component of $T[V_{0}]$. We claim that $|V(H)|=2$. Let $|V(H)|=k_{1}+k_{2}$ in which $k_{1}$ is the number of vertices of $H$ having one neighbor in $V_3$, and $k_2$ is the number of vertices of $H$ having two neighbors in $V_2$. Since $T$ is a tree, $T-V(H)$ has exactly $k_{1}+2k_{2}$ components $T_{1},\cdots,T_{k_1},T_{k_{1}+1},\cdots,T_{k_{1}+2k_{2}}$. Moreover, we let $r=r_{1}+r_{2}$ in which $r_1$ and $r_2$ are the number of components among $\{T_{1},\cdots,T_{k_1}\}$ and $\{T_{k_{1}+1},\cdots,T_{k_{1}+2k_{2}}\}$ which are stars or isolated vertices, respectively. Notice that $\gamma_{rdR}(T_i)\geq|V(T_i)|+2$ for those $T_i$ different from stars and isolated vertices by Theorem \ref{Tree2}, and $\gamma_{rdR}(T_i)=|V(T_i)|+1$ for any other $T_i$ by Corollary \ref{cor-tree}. Therefore, 
\begin{equation}\label{lower}
\begin{array}{lcl}  
\sum_{i=1}^{k_{1}+2k_{2}}\gamma_{rdR}(T_i)&\geq&\sum_{i=1}^{k_{1}+2k_{2}}|V(T_i)|+r+2(k_{1}+2k_{2}-r)\\
&=&n-(k_{1}+k_{2})+r+2(k_{1}+2k_{2}-r)=n+k_{1}+3k_{2}-r.
\end{array}
\end{equation}

Without loss of generality, we may assume that $T_{1},\cdots,T_{r_1}$ are the only components among $\{T_{1},\cdots,T_{k_1}\}$ which are stars or isolated vertices. Note that $f\mid_{V(T_i)}$ is an RDRD function of $T_i$ for each index $i$. In particular, it follows that $f\big{(}V(T_i)\big{)}\geq \gamma_{rdR}(T_i)$ for each $r_{1}+1\leq i\leq k_{1}+2k_{2}$. We now consider a component $T_i$ for some $1\leq i\leq r_{1}$. Since $T_i$ is a star or an isolated vertex, it can be checked that $f\big{(}V(T_i)\big{)}\geq \gamma_{rdR}(T_i)+1$. Therefore,
\begin{equation}\label{upper} 
\sum_{i=1}^{k_{1}+2k_{2}}\gamma_{rdR}(T_i)+r_{1}\leq \sum_{i=1}^{k_{1}+2k_{2}}f\big{(}V(T_i)\big{)}=\gamma_{rdR}(T)=n+2.
\end{equation}
Together inequalities (\ref{lower}) and (\ref{upper}) by taking $r=r_{1}+r_{2}$ into account show that
\begin{equation}\label{con} 
n+k_{1}+3k_{2}-r_{2}\leq \sum_{i=1}^{k_{1}+2k_{2}}\gamma_{rdR}(T_i)+r_{1}\leq n+2.
\end{equation}
On the other hand, $2k_{2}-r_{2}\geq0$. So, we have $n+k_{1}+k_{2}\leq n+2$. Therefore, $k_{1}+k_{2}\leq2$. This necessarily implies that $|V(H)|=k_{1}+k_{2}=2$, as claimed. This means that every component of $T[V_0]$ is isomorphic to $K_2$.

In what follows, we show that $T[V_{2}\cup V_{3}]$ is edgeless. Suppose to the contrary that there exist two adjacent vertices $x,y\in V_{2}\cup V_{3}$. By removing the edge $xy$ of $T$, we get two components $T_1$ and $T_2$. If at least one of them, say $T_1$, is neither a star nor an isolated vertex, then $\gamma_{rdR}(T_1)\geq|V(T_1)|+2$. So, $\gamma_{rdR}(T)=f\big{(}V(T_1)\big{)}+f\big{(}V(T_2)\big{)}\geq \gamma_{rdR}(T_1)+\gamma_{rdR}(T_2)\geq n+3$, which is a contradiction. So, we assume that both $T_1$ and $T_2$ are stars or one of them is a star and the other one is an isolated vertex. In such a situation, it turns out that $T$ is of the form one of the trees depicted in Figure \ref{fig33}. 

\begin{figure}[h]
\tikzstyle{every node}=[circle, draw, fill=white!, inner sep=0pt,minimum width=.16cm]
\begin{center}
\begin{tikzpicture}[thick,scale=.6]
  \draw(0,0) { 

+(-10,0) node{}
+(-7,0) node{}
+(-10,0) --+(-7,0)
+(-10,0.4) node[rectangle, draw=white!0, fill=white!100]{$x$}
+(-7,0.4) node[rectangle, draw=white!0, fill=white!100]{$y$}

+(-10.75,-1) node{}
+(-9.25,-1) node{}
+(-10.25,-1) node[rectangle, draw=white!0, fill=white!100]{$.$}
+(-10,-1) node[rectangle, draw=white!0, fill=white!100]{$.$}
+(-9.75,-1) node[rectangle, draw=white!0, fill=white!100]{$.$}

+(-7.75,-1) node{}
+(-6.25,-1) node{}
+(-7.25,-1) node[rectangle, draw=white!0, fill=white!100]{$.$}
+(-7,-1) node[rectangle, draw=white!0, fill=white!100]{$.$}
+(-6.75,-1) node[rectangle, draw=white!0, fill=white!100]{$.$}

+(-10.75,-1) --+(-10,0) --+(-9.25,-1)
+(-7.75,-1) --+(-7,0) --+(-6.25,-1)


+(-4,0) node{}
+(-1,0) node{}
+(-4,0.4) node[rectangle, draw=white!0, fill=white!100]{$x$}
+(-1.75,-1.4) node[rectangle, draw=white!0, fill=white!100]{$y$}

+(-4.75,-1) node{}
+(-3.25,-1) node{}
+(-4.25,-1) node[rectangle, draw=white!0, fill=white!100]{$.$}
+(-4,-1) node[rectangle, draw=white!0, fill=white!100]{$.$}
+(-3.75,-1) node[rectangle, draw=white!0, fill=white!100]{$.$}

+(-1.75,-1) node{}
+(-0.25,-1) node{}
+(-1.25,-1) node[rectangle, draw=white!0, fill=white!100]{$.$}
+(-1,-1) node[rectangle, draw=white!0, fill=white!100]{$.$}
+(-0.75,-1) node[rectangle, draw=white!0, fill=white!100]{$.$}

+(-4.75,-1) --+(-4,0) --+(-3.25,-1)
+(-1.75,-1) --+(-1,0) --+(-0.25,-1)
+(-4,0) --+(-1.75,-1)


+(2,0) node{}
+(5,0) node{}
+(2.75,-1.4) node[rectangle, draw=white!0, fill=white!100]{$x$}
+(4.25,-1.4) node[rectangle, draw=white!0, fill=white!100]{$y$}

+(1.25,-1) node{}
+(2.75,-1) node{}
+(1.75,-1) node[rectangle, draw=white!0, fill=white!100]{$.$}
+(2,-1) node[rectangle, draw=white!0, fill=white!100]{$.$}
+(2.25,-1) node[rectangle, draw=white!0, fill=white!100]{$.$}

+(4.25,-1) node{}
+(5.75,-1) node{}
+(4.75,-1) node[rectangle, draw=white!0, fill=white!100]{$.$}
+(5,-1) node[rectangle, draw=white!0, fill=white!100]{$.$}
+(5.25,-1) node[rectangle, draw=white!0, fill=white!100]{$.$}

+(1.25,-1) --+(2,0) --+(2.75,-1)
+(4.25,-1) --+(5,0) --+(5.75,-1)
+(2.75,-1) --+(4.25,-1)


+(8,0) node{}

+(7.25,-1) node{}
+(8.75,-1) node{}
+(7.75,-1) node[rectangle, draw=white!0, fill=white!100]{$.$}
+(8,-1) node[rectangle, draw=white!0, fill=white!100]{$.$}
+(8.25,-1) node[rectangle, draw=white!0, fill=white!100]{$.$}
+(9,-0.6) node[rectangle, draw=white!0, fill=white!100]{$x$}

+(10.25,-1) node{}
+(10.25,-0.6) node[rectangle, draw=white!0, fill=white!100]{$y$}

+(7.25,-1) --+(8,0) --+(8.75,-1)
+(8.75,-1) --+(10.25,-1)

};
\end{tikzpicture}
\end{center}
\caption{In each tree, $f(x)=f(y)\in \{2,3\}$.}\label{fig33}
\end{figure}
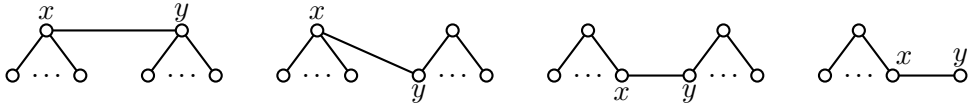 
In each tree depicted in Figure \ref{fig33}, no vertex is labeled $0$ under $f$. But this contradicts our assumption that $V_0$ is nonempty. Therefore, $T[V_{2}\cup V_{3}]$ is edgeless.

Finally, we claim that there is no vertex $v\in V_2$ such that $|N(v)\cap V_0|\geq2$. Suppose to the contrary that there exists a vertex $v\in V_2$ with two neighbors $u,w\in V_0$. Note that $u$ is adjacent to a vertex $x\in V_{2}\setminus\{v\}$ as well as a vertex $u'\in V_{0}\setminus\{w\}$. Let $T_1$ and $T_2$ be the components of $T-uv$ such that $u\in V(T_{1})$. We observe that $T_1$ is different from a star, for otherwise $u'$ would be a leaf with $f(u')=0$, which is impossible. Also, $T_2$ is different from a star by a similar fashion. Therefore, $\gamma_{rdR}(T_1)\geq|V(T_1)|+2$ and $\gamma_{rdR}(T_2)\geq|V(T_2)|+2$. On the other hand, we replace the weight $2$ of $x$ with $3$. This gives us an RDRD function $f'$ of $T$ of weight $\gamma_{rdR}(T)+1$. Moreover, $f'\mid_{V(T_i)}$ is an RDRD function of $T_i$ for each $i\in \{1,2\}$. We now deduce that
$$\gamma_{rdR}(T)+1=f'\big{(}V(T_1)\big{)}+f'\big{(}V(T_2)\big{)}\geq \gamma_{rdR}(T_1)+\gamma_{rdR}(T_2)\geq|V(T_1)|+|V(T_2)|+4,$$
and so $\gamma_{rdR}(T)\geq n+3$. This is a contradiction. Consequently, every vertex in $V_2$ is adjacent to exactly one vertex in $V_0$.
 
In summary, we have the following statements:\\
$\bullet$ every component of $T[V_0]$ is isomorphic to $K_2$,\\
$\bullet$ every vertex in $v\in V_0$ is adjacent to ``only one vertex in $V_3$ and that $N(v)\cap V_{2}=\emptyset$" or ``it has precisely two neighbors in $V_2$ and that $N(v)\cap V_{3}=\emptyset$",\\
$\bullet$ each vertex in $V_2$ is adjacent to precisely one vertex in $V_0$,\\
$\bullet$ every vertex in $V_3$ has at least one neighbor in $V_0$, and\\
$\bullet$ $T[V_{2}\cup V_{3}]$ is edgeless.

Taking the above statements into consideration and that every vertex in $V_1$ must have a neighbor in $V_{2}\cup V_{3}$, we conclude that $T\in \mathcal{T}_1\cup \mathcal{T}_2$.\vspace{0.75mm}



Conversely, let $T'\in \mathcal{T}_1\cup \mathcal{T}_2$. It is routine to see that $\gamma_{rdR}(T')=n+2$ if $T'\in \mathcal{T}_1$. Now, let $T'\in \mathcal{T}_2$. Then, $T'$ is obtained from $T$ as given in the description of $\mathcal{T}_2$. Let $L_{s}(T)$ be the set of leaves of $T$ whose support vertices are strong. Now, the assignment $2$ to all vertices in $L_{s}(T)$, $3$ to all vertices in $L(T)\setminus L_{s}(T)$ as well as to each non-leaf vertex $y\in V(T)$ with $d_{T}(x,y)\equiv0\ (\mbox{mod}\ 3)$ for a given $x\in L(T)$, $0$ to any other vertex of $T$, and $1$ to the other vertices of $T'$ gives us an RDRD function of $T'$ with weight $|V(T')|+2$, and so $\gamma_{rdR}(T')\leq|V(T')|+2$. Therefore, $\gamma_{rdR}(T')=|V(T')|+2$. This completes the proof.
\end{proof}


\section{Graphs with small RDRD numbers}

We give the characterizations of connected graphs $G$ with RDRD numbers $\gamma_{rdR}(G)\in\{3,4,5\}$ in this section. This is simple when we turn our attention to connected graphs with RDRD number equals three. So, we only state the following theorem and omit the proof.

Recall first that by starting with a disjoint union of two graphs $G$ and $H$ and adding edges joining every vertex of $G$ to every vertex of $H$, we obtain the {\em join} of $G$ and $H$, denoted $G\vee H$.

\begin{theorem}\label{three}
For any connected graph $G$, $\gamma_{rdR}(G)=3$ if and only if $G=K_{2}$ or $G=K_{1}\vee H$ in which $H$ is any graph with no isolated vertices.
\end{theorem}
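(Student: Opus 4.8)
The plan is to prove both directions of the biconditional characterizing connected graphs with $\gamma_{rdR}(G)=3$. For the backward direction, I would simply exhibit RDRD functions of weight $3$. If $G=K_2$, assigning $3$ to one vertex and $0$ to the other fails the restrained condition (the $0$-vertex would be isolated in $G[V_0]$), so instead I assign $1$ to one endpoint and $2$ to the other, giving weight $3$; one checks directly this is an RDRD function. If $G=K_1\vee H$ with $H$ having no isolated vertices, let $x$ be the universal vertex and assign $f(x)=3$, $f(v)=0$ for all $v\in V(H)$. Every $v\in V(H)$ has $x\in V_3$ as a neighbor, satisfying the double Roman condition, and since $H$ has no isolated vertices, $G[V_0]=H$ has no isolated vertices, so the restrained condition holds. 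Thus $\gamma_{rdR}(G)\le 3$, and since $\gamma_{rdR}(G)\ge 3$ always holds for any graph with at least one edge, equality follows.

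For the forward direction I would let $f=(V_0,V_1,V_2,V_3)$ be a $\gamma_{rdR}(G)$-function of weight $3$ and enumerate the possibilities for the weight distribution. The composition $(|V_1|,|V_2|,|V_3|)$ summing to weight $3$ is one of: $|V_3|=1$ with $V_1=V_2=\emptyset$; $|V_2|=1,|V_1|=1$; or $|V_1|=3$. The case $|V_1|=3$ with $V_2=V_3=\emptyset$ is impossible because any vertex in $V_1$ needs a neighbor in $V_2\cup V_3=\emptyset$. The case $(|V_1|,|V_2|)=(1,1)$ can only occur in a very small graph: the single $V_2$-vertex can double-Roman-dominate at most via being the unique strong neighbor, but a $0$-vertex would need two neighbors in $V_2$ or one in $V_3$, both unavailable, so $V_0=\emptyset$, forcing $n=2$ and hence $G=K_2$. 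The main case is $V_3=\{x\}$, $V_1=V_2=\emptyset$, so every other vertex lies in $V_0$.

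In this main case I would argue that $x$ must be a universal vertex and that $G[V_0]$ has no isolated vertices. Each $v\in V_0$ must have a neighbor in $V_3$ (it cannot have two neighbors in $V_2=\emptyset$), so every vertex of $V_0$ is adjacent to $x$; hence $x$ is universal and $G=K_1\vee G[V_0]$. The restrained condition says $G[V_0]$ has no isolated vertices. Writing $H=G[V_0]$, we get exactly $G=K_1\vee H$ with $H$ having no isolated vertices, as required. The only subtlety to address is whether $V_0$ could be empty in this case: if $V_0=\emptyset$ then $n=1$ and $G=K_1$, which is excluded since $\gamma_{rdR}(K_1)=2\ne 3$ (a single vertex must receive at least $2$), so indeed $V_0\ne\emptyset$ and $H$ is a genuine graph with no isolated vertices.

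The step I expect to be the main obstacle is the careful handling of the degenerate weight distribution $(|V_1|,|V_2|)=(1,1)$, where one must rule out any nontrivial $V_0$ and confirm the only surviving configuration is $G=K_2$; the bookkeeping there, together with verifying that $K_2$ does indeed arise only from the $V_1,V_2$ pattern and not from the $V_3$-pattern, is where the argument requires genuine care rather than routine checking.
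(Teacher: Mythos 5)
Your proof is correct, and since the paper explicitly omits a proof of this theorem as routine, the natural comparison is with its proofs of the companion characterizations (Theorems \ref{four} and \ref{five}): your argument — exhibiting weight-$3$ RDRD functions for the two families and, for the converse, enumerating the possible weight distributions $(|V_1|,|V_2|,|V_3|)\in\{(3,0,0),(1,1,0),(0,0,1)\}$ and showing the only surviving configurations are $K_2$ and $K_1\vee H$ — is exactly that same style of case analysis. I see no gaps; the degenerate cases ($V_0=\emptyset$ forcing $n\le 2$, and $K_1$ being excluded since $\gamma_{rdR}(K_1)=2$) are handled correctly.
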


\begin{theorem}\label{four}
For any connected graph $G$, $\gamma_{rdR}(G)=4$ if and only if $G\in \Theta=\{\overline{K_{2}}\vee H_{1},K_{1}\vee(K_{1}+H_{2})\}\cup\{P_{3}\}$ in which $H_{1}$ and $H_{2}$ are any graphs with no isolated vertices.
\end{theorem}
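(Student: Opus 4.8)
The plan is to prove the characterization by establishing both directions, with the forward direction requiring the bulk of the work. First I would verify the easy converse: for each candidate graph in $\Theta$, exhibit an explicit RDRD function of weight $4$ and argue that weight $3$ is impossible. For $\overline{K_2}\vee H_1$, assigning $2$ to each of the two vertices of $\overline{K_2}$ and $0$ to every vertex of $H_1$ works, since each vertex of $H_1$ sees two vertices labeled $2$ and has a neighbor inside $H_1$ (because $H_1$ has no isolated vertices); for $K_1\vee(K_1+H_2)$, one places $3$ on the $K_1$ apex, $1$ on the isolated $K_1$, and $0$ on $H_2$; for $P_3$, the center gets $3$ and one leaf gets $1$. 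In each case weight $3$ is ruled out by Theorem \ref{three}, since none of these graphs is $K_2$ or has a universal vertex dominating a graph with no isolated vertices in the required way.

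For the forward direction I would start from a $\gamma_{rdR}(G)$-function $f=(V_0,V_1,V_2,V_3)$ of weight $4$ and perform a case analysis on the multiset of nonzero labels, which must sum to $4$. The possibilities are essentially $\{3,1\}$, $\{2,2\}$, $\{2,1,1\}$, and $\{1,1,1,1\}$. I would immediately discard $\{1,1,1,1\}$ and $\{2,1,1\}$ in most configurations using Observation \ref{leaf}(ii) and the DRD conditions: a label-$1$ vertex needs a neighbor in $V_2\cup V_3$, and if $V_0\neq\emptyset$ each zero vertex needs both its domination condition met and a neighbor inside $V_0$. The interesting surviving cases are when the weight is realized as a single $3$ plus a single $1$, or two $2$'s. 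When $f$ uses labels $\{2,2\}$ with both $2$'s dominating all of $V_0$, connectedness and the restrained condition force $V_0$ (if nonempty) to consist of vertices each adjacent to both label-$2$ vertices and to another zero vertex, which yields $\overline{K_2}\vee H_1$; if $V_0=\emptyset$ then $n=2$, contradicting $\gamma_{rdR}=4$ under Theorem \ref{three}. When $f$ uses $\{3,1\}$, the label-$3$ vertex $x$ dominates all zeros, the label-$1$ vertex $u$ must be adjacent to $x$, and the restrained condition on the zeros together with connectedness produces either $K_1\vee(K_1+H_2)$ (when $V_0\neq\emptyset$) or $P_3$ (when $V_0=\emptyset$, giving $n=2$ plus the pendant structure).

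The main obstacle I expect is the careful bookkeeping in the case $V_0=\emptyset$ versus $V_0\neq\emptyset$, and in particular ensuring that the ``$H_i$ has no isolated vertices'' conditions emerge exactly rather than as over- or under-constraints. When $V_0=\emptyset$, the restrained condition is vacuous, so the classification is driven purely by the DRD requirement on label-$1$ vertices and by the minimality forcing small graphs; here I must rule out spurious small graphs by invoking $\gamma_{rdR}(G)=4$ (not $3$) via Theorem \ref{three}. When $V_0\neq\emptyset$, the subtlety is that every zero vertex must simultaneously satisfy its Roman condition (a neighbor in $V_3$, or two neighbors in $V_2$) and lie in a nontrivial component of $G[V_0]$; translating these joint constraints into the precise join structure, and confirming that the residual graph on the zeros (respectively on the non-apex vertices) is exactly an arbitrary graph with no isolated vertices, is where the argument is most delicate. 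I would handle this by fixing the dominating label-$2$ or label-$3$ vertices, observing that every other vertex must be zero (else the weight exceeds $4$), and then reading off the adjacency pattern forced among the zeros, taking care that connectedness prevents the graph from splitting.
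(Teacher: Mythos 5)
Your overall strategy---a case analysis on the multiset of nonzero labels of a minimum RDRD function, combined with minimality arguments and Theorem~\ref{three} to pin down the structure---is exactly the paper's. However, there are concrete slips in the bookkeeping. First, the function you exhibit for $P_3$ (center $3$, one leaf $1$, the other leaf $0$) is not an RDRD function: the $0$-leaf has no neighbor in $V_0$, so the restrained condition fails; the correct weight-$4$ assignment is $(1,2,1)$. Second, and more seriously, $P_3$ does not arise from the case $\{3,1\}$ with $V_0=\emptyset$: that configuration forces $n=2$, i.e.\ $G=K_2$, which is impossible since $\gamma_{rdR}(K_2)=3$. The graph $P_3$ arises precisely from the multiset $\{2,1,1\}$, which you propose to ``immediately discard'': there $V_0=\emptyset$ is forced (a $0$-vertex would need two neighbors in $V_2$ or one in $V_3$), hence $n=3$ and $G\in\{P_3,K_3\}$, with $K_3$ excluded because $\gamma_{rdR}(K_3)=3$. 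Discarding $\{2,1,1\}$ outright therefore loses $P_3$ from the classification.

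In the two main cases you also assert the final structure without the step that actually produces it. In the $\{2,2\}$ case one must show the two $2$-vertices are non-adjacent (otherwise relabelling one of them $3$ and the other $0$ yields a weight-$3$ RDRD function), which is exactly what distinguishes $\overline{K_2}\vee H_1$ from $K_2\vee H_1$; in the $\{3,1\}$ case one must show the $1$-vertex has no neighbor in $V_0$ (otherwise lowering its label to $0$ yields weight $3$), which is what makes the added $K_1$ nonadjacent to $H_2$. The paper supplies both of these by explicitly exhibiting the cheaper function, and your plan needs them spelled out. Finally, your blanket claim that weight $3$ is excluded for every member of $\Theta$ because none of these graphs ``has a universal vertex dominating a graph with no isolated vertices'' is not true as stated: for instance $\overline{K_2}\vee K_2=K_4-e$ does have such a universal vertex, so the converse direction requires more care than you (or, admittedly, the paper's one-line dismissal) give it.
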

\begin{proof}
It is readily seen that $\gamma_{rdR}(G)=4$ for all $G\in \Theta$. Conversely, we suppose that $\gamma_{rdR}(G)=4$ such that $G\neq P_{3}$. Let $f=(V_{0},V_{1},V_{2},V_{3})$ be a $\gamma_{rdR}(G)$-function. We distinguish two cases depending of the possible values for $|V_{1}|$, $|V_{2}|$ and $|V_{3}|$.\vspace{0.25mm}

\textit{Case 1.} $|V_{2}|=2$. Let $V_{2}=\{x,y\}$. This implies that $V_{1}=V_{3}=\emptyset$ and $V_{0}\neq \emptyset$. By the definition, $H_{1}=G[V_{0}]$ has no isolated vertices. Moreover, all vertices of $H_{1}$ are adjacent to both $x$ and $y$. If $xy\in E(G)$, then the assignment $g(x)=3$ and $g(v)=0$ for the other vertices $v$ is an RDRD function of $G$ with $\omega(g)=3$, which is impossible. Therefore, $xy\notin E(G)$. This shows that $G\cong \overline{K_{2}}\vee H_{1}$.\vspace{0.25mm}

\textit{Case 2.} $(|V_{1}|,|V_{3}|)=(1,1)$. In such a situation, $V_{0}\neq \emptyset$ and $V_{2}=\emptyset$. Moreover, $H_{2}=G[V_{0}]$ has no isolated vertices. Let $V_{1}=\{y\}$ and $V_{3}=\{x\}$. We have $\mbox{deg}(x)=|V(G)|-1$ by the definition. Note that $y$ has no neighbor in $V_{0}$, for otherwise $g(x)=3$ and $g(v)=0$ for any other vertex $v$ would be an RDRD function of $G$ of weight $3$, which is a contradiction. Therefore, $G\cong K_{1}\vee(K_{1}+H_{2})$.\vspace{0.25mm}

In both cases, we have $G\in \Theta$.
\end{proof}

In order to characterize all connected graphs with RDRD number five, we introduce the family $\Omega$ as the union of the following subfamilies:\vspace{0.75mm}\\
$\Omega_{1}$: All graphs $G$ obtained from any graph $H$ with no isolated vertices by adding three new vertices $x$, $y$ and $z$ such that both $x$ and $y$ are adjacent to all vertices of $H$ and $xz\in E(G)$.\vspace{0.75mm}\\
$\Omega_{2}$: The family of graphs $G'$ obtained from any graph $G\in \Omega_{1}$ by joining $z$ to some non-universal vertices (if any) of $H$.\vspace{0.75mm}\\
$\Omega_{3}$: The family of graphs $G''$ obtained from any graph $G\in \Omega_{1}$ by joining $z$ to $y$.\vspace{0.75mm}\\
$\Omega_{4}$: All graphs $G$ obtained from any graph $H$ with no isolated vertices by adding three new vertices $x$, $a$ and $b$ such that $\mbox{deg}(x)=|V(G)|-1$ and $a$ and $b$ have degree one.\vspace{0.75mm}\\
$\Omega_{5}$: All graphs $G$ obtained from any graph $H$ with no isolated vertices by adding two new vertices $x$ and $y$, joining $x$ to all vertices of $H$ and joining $y$ to $1\leq i\leq|V(H)|-1$ vertices of $H$.

\begin{theorem}\label{five}
For any connected graph $G$ of order $n$, $\gamma_{rdR}(G)=5$ if and only if $G\in \Omega\cup\{K_{1,3}\}$.
\end{theorem}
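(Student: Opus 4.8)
The plan is to prove both implications, leaning on the already-established characterizations of the connected graphs with $\gamma_{rdR}\in\{3,4\}$ (Theorems \ref{three} and \ref{four}) together with the structural restrictions of Observation \ref{leaf}. For the sufficiency direction I would treat each family separately and first produce an explicit RDRD function of weight $5$: for a graph in $\Omega_1\cup\Omega_2\cup\Omega_3$ the assignment $\big(f(x),f(y),f(z)\big)=(2,2,1)$ with $0$ elsewhere; for a graph in $\Omega_4$ the assignment $\big(f(x),f(a),f(b)\big)=(3,1,1)$ with $0$ elsewhere; for a graph in $\Omega_5$ the assignment $\big(f(x),f(y)\big)=(3,2)$ with $0$ elsewhere; and for $K_{1,3}$ the value $2$ on the centre and $1$ on the three leaves. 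Each is easily checked to be an RDRD function, so $\gamma_{rdR}\le5$ in every case. The delicate half of sufficiency is the matching lower bound $\gamma_{rdR}\ge5$, which I would obtain by verifying that none of these graphs lies in the weight-$3$ class of Theorem \ref{three} nor in the weight-$4$ class $\Theta$ of Theorem \ref{four}; since $\gamma_{rdR}\ge3$ for every connected graph on at least two vertices, this forces $\gamma_{rdR}\ge5$.

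For the necessity direction I would fix a $\gamma_{rdR}(G)$-function $f=(V_0,V_1,V_2,V_3)$ chosen so that $|V_0|$ is maximum; by Observation \ref{leaf}(ii), $V_1$ is then independent and no vertex of $V_1$ has a neighbour in $V_0$. Since $\omega(f)=|V_1|+2|V_2|+3|V_3|=5$, the only possibilities for $(|V_1|,|V_2|,|V_3|)$ are $(5,0,0)$, $(3,1,0)$, $(1,2,0)$, $(2,0,1)$ and $(0,1,1)$. The tuple $(5,0,0)$ is impossible, as a vertex of $V_1$ would need a neighbour in $V_2\cup V_3=\emptyset$. In each of the four remaining cases I would read off the forced adjacencies from the double Roman conditions: every vertex of $V_0$ must see the unique $V_3$-vertex (when $V_3\ne\emptyset$, since then $|V_2|\le1$ cannot supply two $V_2$-neighbours) or both $V_2$-vertices (when $V_2=\{x,y\}$ and $V_3=\emptyset$); every vertex of $V_1$ must see $V_2\cup V_3$ but, by the choice of $f$, misses $V_0$; and $G[V_0]$ has no isolated vertices. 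Together with the fact that the degenerate possibility $V_0=\emptyset$ can only occur on graphs of order at most $4$, which by Theorems \ref{three} and \ref{four} have $\gamma_{rdR}\le4$, this pins $G$ down: $(3,1,0)$ yields $K_{1,3}$; $(1,2,0)$ yields a graph of $\Omega_1$ or $\Omega_3$; $(2,0,1)$ yields a graph of $\Omega_4$; and $(0,1,1)$ yields a graph of $\Omega_5$, so $G\in\Omega\cup\{K_{1,3}\}$.

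I expect the main obstacle to be ensuring that these reconstructions land inside the stated families and not inside the smaller weight-$3$ or weight-$4$ classes, and dually, in the sufficiency direction, that each family genuinely avoids $\Theta$. Concretely, several candidate extra edges must be excluded by the minimality $\gamma_{rdR}(G)=5$: in the $(1,2,0)$ case an edge between the two $V_2$-vertices, or an edge rendering one of them universal, produces an RDRD function of weight at most $4$, contradicting $\gamma_{rdR}(G)=5$; likewise in the $(0,1,1)$ case an edge between the $V_3$- and $V_2$-vertices, or a $V_2$-vertex adjacent to \emph{all} of $V_0$, collapses the weight below $5$. These exclusions are precisely what force the ``non-edge'' and ``proper subset'' stipulations in the descriptions of $\Omega_1,\Omega_3$ and $\Omega_5$. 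A point I would isolate carefully is the ``hidden isolated vertex'' phenomenon: in $\Omega_3$ the two apex vertices $x,y$ are adjacent to all of $H\cup\{z\}$ and non-adjacent to each other, yet $z$ is isolated in $G[H\cup\{z\}]$, so $G\not\cong\overline{K_2}\vee H_1$ for any $H_1$ without isolated vertices, which is exactly why $\gamma_{rdR}$ is not $4$; analogous checks certify $\Omega_4$ and $\Omega_5$. Finally, because the families overlap (graphs from the $\Omega_2$ construction already appear in $\Omega_1\cup\Omega_5$), I would only claim membership in the union $\Omega\cup\{K_{1,3}\}$, rather than a one-to-one correspondence between weight distributions and families.
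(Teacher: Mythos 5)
Your overall route coincides with the paper's: the necessity direction is the same enumeration of the weight distributions $(|V_1|,|V_2|,|V_3|)$ summing (with multiplicities $1,2,3$) to $5$, with the structure of $G$ read off from the double Roman and restrained conditions, and your choice of a $\gamma_{rdR}(G)$-function with $|V_0|$ maximum (so that Observation \ref{leaf}(ii) applies) is a clean simplification that absorbs the paper's $\Omega_2$ subcase into the $(0,1,1)$ case and hence into $\Omega_5$; since only membership in the union $\Omega\cup\{K_{1,3}\}$ is claimed, that is legitimate. The explicit weight-$5$ functions you give for each family are correct, and your treatment of the tuple $(5,0,0)$ and of the degenerate case $V_0=\emptyset$ is fine. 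The problem lies in the sufficiency direction, which the paper dismisses with ``it is readily seen'' and which you rightly identify as the delicate half.

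The step ``analogous checks certify $\Omega_4$ and $\Omega_5$'' fails for $\Omega_5$ as that family is defined. Take $H=K_2$ with vertices $h_1,h_2$, join $x$ to both and $y$ to $h_1$ only (so $i=1\leq|V(H)|-1$); the resulting graph $G$ belongs to $\Omega_5$, yet $h_1$ is a universal vertex of $G$ and $G-h_1=K_2+K_1$, so $G\cong K_1\vee(K_1+K_2)\in\Theta$. Indeed $f(h_1)=3$, $f(y)=1$, $f(x)=f(h_2)=0$ is an RDRD function of weight $4$, so $\gamma_{rdR}(G)=4$. Hence not every member of $\Omega_5$ avoids the weight-$4$ class, and your lower-bound argument cannot be completed without tightening the definition of $\Omega_5$ (one must at least forbid any neighbour of $y$ in $H$ from being universal in $G$). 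A similar repair is needed for $\Omega_2$: with $H=P_3=uwv$, joining $z$ to the two non-universal vertices $u,v$ produces a graph of the form $\overline{K_2}\vee H_1$ with $H_1$ having no isolated vertices, again a member of $\Theta$. These defects are inherited from the theorem as stated---the paper asserts the converse without proof and it is false for these examples---but since your proposal is the one that actually undertakes the verification, this is precisely the point at which it breaks, and it cannot be patched by a routine check alone.
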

\begin{proof}
Suppose that $f=(V_{0},V_{1},V_{2},V_{3})$ is an RDRD function of $G$ with weight $\omega(f)=\gamma_{rdR}(G)=5$. We consider the following cases.\vspace{0.25mm}

\textit{Case 1.} $(|V_{1}|,|V_{2}|)=(3,1)$. It is then easy to see that $V_{0}=V_{3}=\emptyset$, necessarily. Let $V_{1}=\{a,b,c\}$ and $V_{2}=\{x\}$. Then, $x$ must be adjacent to all vertices in $V_{1}$. If there exists an edge among the vertices of $V_{1}$, say $ab$, then $\big{(}g(a),g(b),g(c),g(x)\big{)}=(0,0,1,3)$ is an RDRD function of $G$ with weight four, a contradiction. Therefore, $\{a,b,c\}$ is independent. So, $G\cong K_{1,3}$.\vspace{0.25mm}

\textit{Case 2.} $(|V_{1}|,|V_{2}|)=(1,2)$. Let $V_{1}=\{z\}$ and $V_{2}=\{x,y\}$. If $V_{0}=\emptyset$, then $G\cong P_{3}$ or $C_{3}$. This is impossible as $\gamma_{rdR}(P_{3})=4$ and $\gamma_{rdR}(C_{3})=3$. So, we must have $V_{0}\neq\emptyset$. By the definition, $H=G[V_{0}]$ is a graph with no isolated vertices whose all vertices are adjacent to both $x$ and $y$. Moreover, $z$ has at least one neighbor in $\{x,y\}$, say $x$. If $xy\in E(G)$, then $\big{(}g(x),g(y)\big{)}=(3,0)$ and $g(v)=f(v)$ for the other vertices $v$ is an RDRD function of $G$ with $\omega(g)=4$, a contradiction. Therefore, $xy\notin E(G)$. We clearly observe that $G\in \Omega_{1}$ if $\mbox{deg}(z)=1$. So, we may assume that $\mbox{deg}(z)\geq2$. We distinguish two possibilities depending on the behavior of $z$.\vspace{0.25mm}

$\bullet$ $z$ has some neighbors in $V_{0}$. Suppose that $z$ is adjacent to a universal vertex $z'$ of $H$. Then, we get the RDRD function $g(z')=3$ and $g(v)=0$ for any other vertex $v$ of weight $3$. This is a contradiction. This shows that $z$ is not adjacent to any universal vertex of $H$. So, $G\in \Omega_{2}$.\vspace{0.25mm}

$\bullet$ $z$ has no neighbor in $V_{0}$. Then, $z$ is necessarily adjacent to $y$. Therefore, $G\in \Omega_{3}$.\vspace{0.25mm}

\textit{Case 3.} $(|V_{1}|,|V_{3}|)=(2,1)$. We easily observe that $V_{2}=\emptyset$ and $V_{0}\neq \emptyset$. Moreover, $H=G[V_{0}]$ has no isolated vertices by the definition. Let $V_{1}=\{a,b\}$ and $V_{3}=\{x\}$. Note that $x$ is adjacent to any other vertex of $G$. If $ab\in E(G)$, then $g(a)=g(b)=0$ and $g(v)=f(v)$ for the other vertices $v$ gives us an RDRD function of $G$ of weight three, which is impossible. Therefore, $ab\notin E(G)$. Suppose now that at least one of $a$ and $b$, say $a$, has a neighbor in $V_{0}$. Then, $g(a)=0$ and $g(v)=f(v)$ for any other vertex $v$ gives an RDRD function of $G$ with $\omega(g)=4$. This is impossible. Therefore, both $a$ and $b$ have no neighbor in $V_{0}$. In such a situation, we observe that $G$ belongs to $\Omega_{4}$.\vspace{0.25mm}

\textit{Case 4.} $(|V_{2}|,|V_{3}|)=(1,1)$. Clearly, $V_{1}=\emptyset$ and $V_{0}\neq \emptyset$. By the definition, $H=G[V_{0}]$ is a graph with no isolated vertices. Let $V_{2}=\{y\}$ and $V_{3}=\{x\}$. In order to fulfill the conditions of the RDRD function $f$ of $G$, all vertices of $H$ must be adjacent to $x$. If $xy\in E(G)$, then $\big{(}g(x),g(y)\big{)}=(3,1)$ and $g(v)=0$ for the other vertices $v$ is an RDRD function of $G$ with weight less than five. Therefore, $xy\notin E(G)$. If $N(y)\cap V(H)=\emptyset$, then $y$ is adjacent to $x$ since $G$ is connected, a contradiction. Finally, if $N(y)=V(H)$, then the assignment $g(x)=g(y)=2$ and $g(v)=0$ for all $v\in V(G)\setminus\{x,y\}$ results in $\gamma_{rdR}(G)=4$. This is impossible. Thus, $N(x)=V(H)$ and $y$ is adjacent to $1\leq i\leq|V(H)|-1$ vertices of $H$. So, $G\in \Omega_{5}$.\vspace{0.25mm}

All in all, we have concluded that $G\in \Omega$.

Conversely, it is readily seen that $\gamma_{rdR}(G)=5$ for all $G\in \Omega\cup\{K_{1,3}\}$. This completes the proof.
\end{proof}


\vspace{3mm}
B. Samadi and N. Soltankhah\\
Department of Mathematics, Faculty of Mathematical Sciences, Alzahra University,\\ Tehran, Iran\\
{\it b.samadi@alzahra.ac.ir}\ \ \ {\it soltan@alzahra.ac.ir}\\[0.3cm]
H. Abdollahzadeh Ahangar\\
Department of Mathematics, Babol Noshirvani University of Technology,\\ Shariati Ave., Babol, I.R. Iran, Postal Code: 47148-71167\\
{\it ha.ahangar@nit.ac.ir}\\[0.3cm]
M. Chellali\\
LAMDA-RO Laboratory, Department of Mathematics, University of Blida,\\ Blida, Algeria\\
{\it m\_chellali@yahoo.com}\\[0.3cm]
D.A. Mojdeh\\
Department of Mathematics, University of Mazandaran,\\ Babolsar, Iran\\
{\it damojdeh@umz.ac.ir}\\[0.3cm]
S.M. Sheikholeslami\\
Department of Mathematics, Azarbaijan Shahid Madani University,\\ Tabriz, Iran\\
{\it s.m.sheikholeslami@azaruniv.ac.ir}\\[0.3cm]
J.C. Valenzuela-Tripodoro\\
Department of Mathematics, University of C\'adiz, Spain\\
{\it jcarlos.valenzuela@uca.es}

\end{document}